\newtheorem{theorem}{Theorem}
\newtheorem{proposition}{Proposition}
\newtheorem{lemma}{Lemma}[section]
\newtheorem{corollary}{Corollary}
\theoremstyle{definition}
\newtheorem{definition}{Definition}
\newtheorem{example}{Example}
\theoremstyle{remark} 
\newtheorem{remark}{Remark}
\numberwithin{equation}{section}
\newcommand{\field}[1]{\ensuremath{\mathbb{#1}}}
\newcommand{\CC}{\field{C}}
\newcommand{\ZZ}{\field{Z}}
\DeclareMathOperator{\Tr}{Tr}
\newcommand{\curly}[1]{\mathscr{#1}}
\newcommand{\cB}{\curly{B}}
\newcommand{\cD}{\curly{D}}
\newcommand{\cF}{\curly{F}}
\newcommand{\cW}{\curly{W}}
\newcommand{\A}{\field{A}}
\newcommand{\NN}{\field{N}}
\newcommand{\OO}{\mathcal{O}}
\newcommand{\pp}{\mathfrak{p}}
\newcommand{\g}{\mathfrak{g}}
\def\blfootnote{\xdef\@thefnmark{}\@footnotetext}
 \DeclareMathOperator{\Hom}{Hom}
\DeclareMathOperator{\Symm}{Sym} 
\DeclareMathOperator{\Aut}{Aut}
 \DeclareMathOperator{\End}{End}
\DeclareMathOperator{\Res}{Res} 
\DeclareMathOperator{\Div}{Div}
\DeclareMathOperator{\Ind}{ind}
\begin{document}
\title[Quantum field theories on algebraic curves]
{Quantum field theories on algebraic curves I. Additive bosons}
\author{Leon A. Takhtajan}
\address{Department of Mathematics \\
Stony Brook University\\ Stony Brook, NY 11794-3651 \\ 
USA}
\address{The Euler International Mathematical Institute, St. Petersburg Department of Steklov Mathematical Institute, Pesochnaya nab. 10, St.Petersburg 197022, 
Russia }
\email{leontak@math.sunysb.edu}
\begin{abstract}
Using Serre's adelic interpretation of cohomology, we develop a `differential and integral calculus' on an algebraic curve $X$ over an algebraically closed filed $k$ of constants of characteristic zero, define algebraic analogs of additive multi-valued functions on $X$ and prove corresponding generalized residue theorem. Using the representation theory of the global Heisenberg and lattice Lie algebras, we formulate quantum field theories of additive and charged bosons on an algebraic curve $X$. These theories are naturally connected with the algebraic de Rham theorem.
We prove that an extension of global symmetries (Witten's additive Ward identities) from the $k$-vector space of rational functions on $X$ to the vector space of additive multi-valued functions uniquely determines these quantum theories of additive and charged bosons.  

Bibliography: 18 titles.
\end{abstract}
\keywords{Algebraic curves and algebraic functions, ad\`{e}les, additive multi-valued functions, additive Ward identities, Heisenberg algebra, current algebra on algebraic curve, generalized residue theorem, Fock spaces, quantum theories of additive and charged bosons, expectation value functional}
\maketitle

\section{Introduction}\blfootnote{This work was partially supported by the NSF grants DMS-0204628, DMS-0705263  and DMS-1005769.} 
The classical theory of compact Riemann surfaces has an algebraic counterpart, the theory of algebraic functions of one variable over an arbitrary field of constants, as developed by Dedekind and Weber.
The introduction of differentials into the algebraic theory by Artin and Hasse and the definition of id\`{e}les and ad\`{e}les adeles by Chevalley and Weil opened the way for application of infinite-dimensional methods to the theory of algebraic curves. Classical examples of using such methods are given by Serre's adelic interpretation of cohomology and the Riemann-Roch theorem
 \cite{Serre} and Tate's proof of the general residue theorem \cite{Tate}.    In 1987, Arbarello, de Concini and Kac \cite{Arbarello} interpreted Tate's
approach in terms of central extensions of infinite-dimensional Lie algebras and gave a new proof of Weil's celebrated reciprocity law using the infinite-wedge representation. 

In 1987, Kazhdan \cite{Kazhdan} and Witten \cite{Witten-1} proposed an adelic formulation of the quantum field theory of one-component free fermions on an algebraic curve, and Witten \cite{Witten-2} outlined an approach to other quantum field theories. Let $X$ be an algebraic curve over an algebraically closed field $k$ of constants, and let $L$ be a spin structure on $X$. We write $\mathcal{M}(L)$ for the infinite-dimensional $k$-vector space of meromorphic sections  of $L$ over $X$, and $\mathcal{M}_{P}$ for the completions of $\mathcal{M}(L)$ at all points $P\in X$. In outline, the approach of \cite{Kazhdan, Witten-1} can be described in terms of the following objects.
\begin{itemize}
\item The global Clifford algebra $\mathrm{Cl}_{X}$ on $X$: a restricted direct product over all points $P\in X$ of the local Clifford algebras $\mathrm{Cl}_{P}$, which are related to the $k$-vector spaces $\mathcal{M}_{P}$ by the residue maps $\Res_{P}(fg)$.
\item The adelic Clifford module $\mathfrak{F}_{X}$ (the global fermion Fock space): a restricted $\ZZ/2\ZZ$-graded tensor product of the local Clifford modules $\mathfrak{F}_{P}$ over all $P\in X$.
\item The `expectation value' functional: a linear map $\langle\,\cdot\,\rangle : \mathfrak{F}_{X}\rightarrow k$, satisfying the condition
\begin{equation} \label{fermi-1}
\langle f\cdot u\rangle=0\quad\text{for all}\quad f\in\mathcal{M}(L)\subset \mathrm{Cl}_{X},\;u\in\mathfrak{F}_{X},
\end{equation}
where the vector space $\mathcal{M}(L)$ is embedded diagonally into the global Clifford algebra $\mathrm{Cl}_{X}$.
\end{itemize}

In this pure algebraic formulation of one-component free fermions on an algebraic curve, the products of field operators at points $P\in X$ are replaced by the vectors $u=\hat{\otimes}_{P\in X}u_{P}\in\mathfrak{F}_{X}$, 
and the linear map $\langle\,\cdot\,\rangle$ is a mathematical way of defining the correlation functions of quantum fields. At the physical level of rigor, these functions  are introduced by the Feynman path integral. The vector space $\mathcal{M}(L)$ acts on $\mathfrak{F}_{X}$ by global symmetries, and the invariance of the quantum theory of free fermions with respect to these symmetries is expressed by the quantum conservation laws \eqref{fermi-1}, also known as the \emph{additive Ward identities}. It is proved in \cite{Witten-1, Witten-2} that if the spin structure $L$ has no global holomorphic sections, then the additive Ward identities uniquely determine the expectation value functional $\langle\,\cdot\,\rangle$. The relations \eqref{fermi-1} are compatible with the global residue theorem on $X$:
\begin{equation*}
\sum_{P\in X}\Res_{P}(fdg)=0,\quad f,g\in\mathcal{M}(L).
\end{equation*}

Witten \cite{Witten-2} developed the basics of quantum field theories associated with  current algebras on an algebraic curves and mentioned the theories associated with loop  groups on algebraic curves. The global symmetries of these theories are respectively given by the rational maps of the algebraic curve $X$ to a finite-dimensional semi-simple Lie algebra over $k$ and the rational maps of $X$ to the corresponding Lie group. In the latter case, the analogues of quantum conservation laws \eqref{fermi-1} were called \emph{multiplicative Ward identities} in \cite{Witten-2}. It was emphasized in  \cite[Sect. IV]{Witten-2} that if the genus of $X$ is greater than zero, then the  Ward identities do not uniquely determine the expectation value functional $\langle\,\cdot\,\rangle$, even in the Lie-algebraic case. Thus the main problem in the construction of quantum field theories on an algebraic curve is to find additional conditions which would uniquely determine the linear functional $\langle\,\cdot\,\rangle$.   

When $X$ is a Riemann surface (that is, an algebraic curve over the field  $\CC$ of complex numbers, equipped with the complex topology), the usual physicist's representation of correlation functions is given by the Feynman path integral, which uses the Lagrangian formulation of the theory.
This approach is not applicable in the case when $X$ is an algebraic curve over an arbitrary field of constants. Hence one needs other ways to define the correlation functions. The basic example is given by the `integrable' case, when the expectation value functional is uniquely determined by the global symmetries (and hence so are all correlation functions). 

In \cite{Takhtajan} we we gave a solution of the problem of the unique determination of the expectation value functional for the simplest scalar theories, when the finite-dimensional Lie algebra is the abelian Lie algebra $k$, and the corresponding Lie group is the multiplicative group $k^{\ast}=k\setminus\{0\}$.   
We call these quantum field theories the theories of \emph{additive} and \emph{multiplicative bosons} respectively. The solution suggested in 
\cite{Takhtajan} involves enlarging the global symmetries by considering algebraic analogue of the vector space additive multi-valued functions on a Riemann surface (analogues of the classical abelian integrals of the second kind with zero $a$-periods). Although the classical theory of abelian integrals was already developed by Riemann (see, for example, \cite{Iwasawa} and \cite{Kra} for a modern exposition), the corresponding algebraic theory (integral calculus on algebraic curves) has not been fully developed. In this paper we partially fill this gap in the case when the field $k$ of constants has characteristic zero and give an explicit construction of the quantum field theories of additive bosons on an algebraic curve. These theories are naturally connected with the algebraic de Rham theorem, and their global symmetries form a vector space of additive multi-valued functions; see Theorems \ref{additive theorem} and \ref{charged theorem} for precise statements. Our construction of quantum field theories on algebraic curves may be regarded as an algebraic analogue of the geometric realization of conformal field theories on Riemann surfaces in \cite{KNTY}. The quantum field theory of multiplicative bosons requires an algebraic analogue of the group of multiplicative multi-valued  functions on a Riemann surface (analogues of the exponentials of abelian integrals of the third kind with zero $a$-periods). We plan to discuss the analogue of this group and the corresponding multiplicative Ward identities in a separate publication.

Here is the more detailed description of the contents of the paper. In Section \ref{Basic facts} we recall the necessary basic facts from the theory of algebraic curves. Namely, let $X$ be an algebraic curve of genus $g$ over an algebraically closed field $k$ of constants, $F=k(X)$ the field of rational functions on $X$, and $F_{P}$ the corresponding local fields (the completions of $F$ with respect to the regular discrete valuations $v_{P}$ corresponding to the discrete valuation rings at points $P\in X$). In Section \ref{Definitions}
we introduce the ring of ad\`{e}les 
$$\A_{X}=\coprod_{P\in X}F_{P}$$ 
as a restricted direct product of the local fields $F_{P}$ and  describe Serre's adelic interpretation of cohomology. In Section \ref{differentials} we recall the definitions of the $F$-module $\Omega^1_{F/k}$ of K\"{a}hler differentials on $X$, the corresponding $\A_{X}$-module of differential ad\`{e}les $\bm{\Omega}_{X}$, the differential map $d:\A_{X}\rightarrow\bm{\Omega}_{X}$ and the residue map $\Res: \bm{\Omega}_{X}\rightarrow k$. In Section \ref{R-R} we describe Serre duality and the Riemann-Roch theorem. 

In Section \ref{Calculus}, assuming that the field $k$ of constants is of characteristic zero, we recall  the differential calculus on an algebraic curve $X$ (the structural theory of the $k$-vector space  of K\"{a}hler differentials $\Omega^1_{F/k}$ on $X$) and develop a corresponding integral calculus. Namely, in Section \ref{A-functions} we
follow \cite{ Chevalley} and \cite{Eichler} and endow the $k$-vector space $\Omega^{(2\text{nd})}$ of differentials of the second kind (that is, differentials on $X$ with zero residues) with a skew-symmetric bilinear form
\begin{equation*}
(\omega_1,\omega_2)_X=\sum_{P\in X}
\Res_P(d^{-1}\omega_1\,\omega_2),\quad\omega_1,\omega_2\in \Omega^{(\mathrm{2nd})}.
\end{equation*}

The main result of the differential calculus is Theorem \ref{Chevalley},  an algebraic version of de Rham theorem. Theorem \ref{Chevalley} goes back to Chevalley and Eichler and, for an algebraic curve $X$ of genus $g\geq 1$, states\footnote{The case $g=0$ is trivial.}  that the $2g$-dimensional $k$-vector space $\Omega^{(2\text{nd})}/dF$ is a symplectic vector space with symplectic form
$(~,~)_X$.
Moreover, for every choice of a non-special effective divisor $D=P_{1}+\dots +P_{g}$ of a degree $g$ on $X$ and uniformizers
$t_{i}$ at $P_{i}$,  there is an isomorphism
$$\Omega^{(\mathrm{2nd})}/dF\simeq\Omega^{(\mathrm{2nd})}\cap \Omega^1_{F/k}(2D).$$
The space $\Omega^{(\mathrm{2nd})}\cap \Omega^1_{F/k}(2D)$ has a natural symplectic basis $\{\theta_{i},\omega_{i}\}_{i=1}^{g}$, 
where $\theta_{i}$ (resp. $\omega_{i}$) are differentials of the first (reps. second) kind  with the following properties. The $\theta_{i}$ vanish at all points $P_{j}$ for $j\neq i$, and $\theta_{i}=(1+O(t_{i}))dt_{i}$ at $P_{i}$, while the $\omega_{i}$ are regular at all points $P_{j}$ for $j\neq i$, and $\omega_{i}=(t_{i}^{-2}+O(t_{i}))dt_{i}$ at $P_{i}$. Hence $\theta_i$ (reap. $\omega_i$) are algebraic analogues of the differentials
of the first kind with normalized $a$-periods (resp. differentials of the second kind with second order poles, zero $a$-periods and normalized $b$-periods) on a compact Riemann surface. The $a$-periods of $\omega\in\Omega^{(\mathrm{2nd})}$ are algebraically defined by $(\omega,\omega_{i})_{X}$, $i=1,\dots,g$, and we write $\Omega^{(\mathrm{2nd})}_{0}$ for the isotropic subspace of $\Omega^{(\mathrm{2nd})}$ consisting of all differentials of the second kind with zero $a$-periods. By Proposition \ref{Second} we have
\begin{equation} \label{2-a}
\Omega^{(\mathrm{2nd})}_0 = k\cdot\omega_1\oplus\dots\oplus k\cdot\omega_g\oplus dF.
\end{equation}

In Section \ref{A-functions} we also introduce an algebraic notion of additive multi-valued functions on $X$. By definition, the $k$-vector space of additive multi-valued functions is a subspace $\mathcal{A}(X)$ of the ad\`{e}le ring $\A_X$ satisfying $F\subset \mathcal{A}(X)$ and $d\mathcal{A}(X)\subset \Omega^1_{F/k}$ and the additional condition that if $a\in\mathcal{A}(X)$ and $da=0$, then $a=c\in k$. The main result of the integral calculus for differentials of the second kind with zero $a$-periods is the explicit construction of the vector space $\mathcal{A}(X,D)$ in Example \ref{Additive}. This space plays a fundamental role in the theory of additive bosons. It is parametrized by the choices of a non-special divisor 
$D=P_1 + \dots + P_g$  of degree $g$ on $X$,  local uniformizers $t_{i}$ at the points $P_{i}$ and solutions of the equations $d\eta_i = \omega_i$ in $\A_{X}$ (with any fixed choice of the local additive constants). It is defined as
\begin{equation*} 
\mathcal{A}(X;D) =k\cdot\eta_1\oplus\dots\oplus k\cdot\eta_g\oplus F\subset\A_X
\end{equation*}
and possesses the property $d(\mathcal{A}(X;D))=\Omega^{(\mathrm{2nd})}_0$. We finally introduce the additive multi-valued functions $\eta_{P}^{(n)}\in\mathcal{A}(X;D)$ with a single pole of order $n$ at $P\in X$ and prove (see Lemma \ref{reciprocity}) that every rational function $f\in F$ has a unique partial fraction expansion, the partial fractions being these $\eta_{P}^{(n)}$. We also explain the difficulties arising in an attempt to define algebraic analogues of multiplicative multi-valued functions.

In Section \ref{Local} we formulate the local quantum field theories of additive and charged  bosons. The local theory of additive bosons is associated with the representation theory of the local Heisenberg algebra $\mathfrak{g}_{P}$, a one-dimensional central extension of the abelian Lie algebra $F_{P}$, $P\in X$,  by the $2$-cocycle $c_{P}(f,g)=-\Res_{P}(fdg)$. In Section \ref{Heisenberg-Lie-local} we introduce the highest-weight representation $\rho$ of $\mathfrak{g}_{P}$ on the local Fock space $\cF_{P}$ and define the corresponding contragradient representation $\rho^{\vee}$ of $\mathfrak{g}_{P}$ on the dual local Fock space $\cF_{P}^{\vee}$. In Section \ref{lattice algebra-local} we define a local lattice algebra $\mathfrak{l}_{P}$ as a semi-direct sum of the local Heisenberg algebra $\mathfrak{g}_{P}$ and the abelian Lie algebra $k[\ZZ]$, the group algebra of $\ZZ$. The corresponding irreducible highest-weight $\mathfrak{l}_{P}$-module is the local Fock space $\cB_{P}$ of `charged bosons' (a tensor product of $k[\ZZ]$ and $\cF_{P}$). The material in Sections \ref{Heisenberg-Lie-local} and \ref{lattice algebra-local} is essentially standard (see \cite{kac, ben-zvi-frenkel}). 

In Section \ref{Global} we finally state the global quantum field theories, starting in Section \ref{AB} with the theory of additive bosons on an algebraic curve $X$. This theory is naturally connected with the global Heisenberg algebra $\g_X$, a one-dimensional central extension of the abelian Lie algebra $\g\mathfrak{l}_1(\A_X)=\A_X$ by the $2$-cocycle $c_X=\sum_{P\in X} c_P$. Since the subspace $\Omega_{0}^{(\mathrm{2nd})}$ is isotropic with respect to the bilinear form $(~,~)_X$, we have
\begin{equation} \label{general-res}
c_{X}(a_{1}, a_{2})=0\quad \forall\, a_{1}, a_{2}\in\mathcal{A}(X,D)\subset \A_{X}.
\end{equation}
This may be regarded as a generalized residue theorem for additive multi-valued functions.
The irreducible highest-weight module of the global Heisenberg algebra $\g_{X}$ is the global Fock space
$\cF_{X}$, a restricted tensor product of the local Fock spaces $\cF_P$ over all points $P\in X$. It may be regarded as the space of observables of the quantum theory of additive bosons on $X$.
In Theorem \ref{additive theorem} we prove that there is a unique normalized expectation value functional $\langle\, \cdot\,\rangle:\cF_X\rightarrow k$, uniquely characterized by the global symmetries  
\begin{equation} \label{AWI}
\langle\rho(a)v\rangle=0\quad \forall\, a\in \mathcal{A}(X;D),\quad v\in\cF_{X}.
\end{equation}
Here $\mathcal{A}(X;D)\subset\A_{X}$ is the vector space of additive multi-valued functions on $X$, defined in Section \ref{A-functions}, and $\rho:\g_{X}\rightarrow\End\cF_{X}$ is the corresponding representation of the global Heisenberg algebra. Specifically, we show in Theorem \ref{additive theorem} that
\begin{equation*}
\langle v\rangle=(\Omega_{X},v)\quad\text{for all}\quad v\in\cF_{X}, 
\end{equation*}
where $\Omega_{X}\in\cF_{X}^{\vee}$ is a vector in the dual space to $\cF_{X}$, satisfying an infinite system of equations 
\begin{equation}\label{system-1}
\Omega_{X}\cdot\rho^{\vee}(a)=0\quad \forall\, a\in\mathcal{A}(X,D).
\end{equation}
The vector $\Omega_{X}$ is given by an explicit formula (see Theorem \ref{additive theorem}), which encodes  the analogues of all correlation functions of  quantum additive bosons on $X$. The compatibility of the system \eqref{system-1} is based on the reciprocity law (proved in Lemma \ref{reciprocity}) for the differentials of the second kind with zero $a$-periods. 

The additive Ward identities \eqref{AWI} are also compatible with the generalized residue theorem. 
Namely, since $[\rho(x),\rho(y)]=c_{X}(x,y)\bm{I}$ for $x,y\in\A_{X}$, where $\bm{I}$ is the identity operator on $\cF_{X}$, we get from \eqref{AWI} that for $a_{1}, a_{2}\in\mathcal{A}(X,D)$,
$$0=\langle(\rho(a_{1})\rho(a_{2})-\rho(a_{2})\rho(a_{1}))v\rangle=c_{X}(a_{1},a_{2})\langle v\rangle\quad\forall\, v\in\cF_{X},$$
which yields \eqref{general-res}.

In Section \ref{CB} we define a global lattice algebra $\mathfrak{l}_X$ as a semi-direct sum of the global Heisenberg algebra $\g_{X}$ and the abelian Lie algebra $k[\Div_{0}(X)]$ with generators $e_{D}$, where $D\in\Div_{0}(X)$ is the group algebra of the additive group $\Div_{0}(X)$ of divisors of degree $0$ on $X$. Its irreducible highest weight module is 
the global Fock space $\cB_X$ of charged bosons, which is  the tensor product of the group algebra $k[\Div_0(X)]$ and
the Fock space $\cF_X$ of additive bosons. The main result of Section  \ref{CB} is Theorem \ref{charged theorem} on the existence and uniqueness of an expectation value functional $ \langle\, \cdot\,\rangle:\cB_X\rightarrow k$ which is normalized with respect to the action of the group algebra $k[\Div_0(X)]$ and satisfies the additive Ward identities \eqref{AWI} with respect to the action of the global symmetries (additive multi-valued functions in $\mathcal{A}(X,D)$) on the global Fock space $\cB_{X}$. This functional is of the form $\langle v\rangle=(\hat{\Omega}_{X},v)$ where the vector $\hat{\Omega}_{X}\in\cB_{X}^{\vee}$ in the space dual to $\cB_{X}$ is given by an explicit formula (see Theorem \ref{charged theorem}), which encodes  all correlation functions of quantum charged additive bosons on $X$. In Section \ref{Inv}, following a suggestion of the referee, in Section \ref{Inv} we give a more invariant formulation of Theorem \ref{additive theorem}.
\vspace{3mm}

\noindent
I am grateful to the referee for his careful reading of the manuscript, constructive criticism, remarks,  and valuable suggestions. 

\section{Basic Facts\label{Basic facts}}
Here we recall necessary facts from the theory of algebraic curves. This material is essentially standard (see \cite{Chevalley,Serre,Iwasawa}).
\subsection{Definitions\label{Definitions}} 
An algebraic curve $X$ over an algebraically closed field $k$ is an
irreducible non-singular one-dimensional projective variety over $k$. It is equipped with
the Zariski topology. The field  $F=k(X)$ of rational functions on $X$ is a
finitely generated extension of $k$ of transcendence degree $1$. Conversely, every finitely generated extension of $k$ of transcendence degree $1$ corresponds to a unique (up to isomorphism) algebraic curve over
$k$. Closed points $P$ on $X$ correspond to discrete valuation rings $O_P$ (subrings
of $F$). The rings $O_P$ for all $P\in X$ form a sheaf of rings on $X$: the structure sheaf $\mathcal{O}_X$, a subsheaf of the constant sheaf $\underline{F}$.

For every point $P\in X$ let $v_P$ be the regular discrete valuation of $F$
over $k$, corresponding to the discrete valuation ring $O_P$. The completion of $F$ with respect to $v_P$ is a complete closed field $F_P$ with valuation ring
$\mathcal{O}_P$, which is the completed local ring at $P$ with prime ideal $\mathfrak{p}$ 
and residue class field $k =\mathcal{O}_P/\mathfrak{p}$. The ring $\A_X$ of ad\`{e}les
of $X$ is
\begin{equation*}
\A_X=\coprod_{P\in X}F_P,
\end{equation*}
the restricted direct product over all points $P\in X$ of the local fields $F_P$
with respect to the local rings $\mathcal{O}_P$. By definition,
\begin{displaymath}
x=\{x_P\}_{P\in X}\in \A_X
\end{displaymath}
if $x_P\in\mathcal{O}_P$  for all but finitely many $P\in X$.
The field $F$ is contained in all local fields $F_P$ and is diagonally embedded in
$\A_X$:
\begin{displaymath}
F\ni f\mapsto\{f|_{P}\}_{P\in X}\in\A_X.
\end{displaymath}

The divisor group $\Div(X)$ of $X$ is the free abelian group generated by the points $P\in
X$. By definition,
\begin{displaymath}
D=\sum_{P\in X}n_P\cdot P\in\Div(X)
\end{displaymath}
if $n_P=v_P(D)\in\ZZ$ and $n_P=0$ for all but finitely many $P\in X$. 
Divisors of the form
\begin{equation*}
(f)=\sum_{P\in X} v_P(f)\cdot P\in\Div(X),
\end{equation*}
where $f\in F^{\ast}=F\setminus\{0\}$, are called \emph{principal divisors}. They form a subgroup $\mathrm{PDiv}(X)\simeq F^{\ast}/k^{\ast}$ of $\Div(X)$. The degree of a divisor $D$ is
\begin{equation*}
\deg D=\sum_{P\in X}n_P=\sum_{P\in X} v_P(D)\in\ZZ,
\end{equation*}
and we have $\deg(f)=0$ for $f\in F^{\ast}$. A divisor $D$ is said to be \emph{effective}, if $v_P(D)\geq 0$ for all $P\in X$.
By definition, $D_1$ and $D_2$ are linear equivalent ($D_1\sim D_2$) if $D_1 - D_2 = (f)$ for some $f\in F^{\ast}$. The equivalence classes of divisors form the divisor class group $\mathrm{Cl}(X)=\Div(X)/\mathrm{PDiv}(X)$.

For every divisor $D$ we define a subspace $\A_X(D)$ of the $k$-vector space $\A_X$ by putting
\begin{equation*}
\A_X(D)=\{x\in\A_X : v_P(x_P)\geq -v_{P}(D)~\forall\, P\in X\}\,.
\end{equation*}
The ring  $\A_X$ of ad\`{e}les is a topological ring with the product topology. A base of
neighborhoods of 0 is given by the subspaces $\A_X(D),\,D\in\Div(X)$, and $\A_X$ is a
$k$-vector space with linear topology in the sense of Lefschetz \cite[Ch.~II, \S
6]{Lefschetz}. Since subspaces $\A_X(D)$ is linear compact, $\A_X$ is locally
linear compact. The $k$-vector space $F=k(X)$ is discrete in $\A_X$ and the quotient
space $\A_X/F$ is linear compact \cite[App., \S3] {Iwasawa}.

For every divisor $D$ we have an algebraic coherent sheaf $\mathcal{O}_{X}(D)$ on $X$ whose
stalk at any point $P\in X$ is
\begin{equation*}
\mathcal{O}_{X}(D)_P=\{f\in F : v_P(f)\geq - v_P(D)\}.
\end{equation*}
Linear equivalent divisors correspond to isomorphic sheaves. We denote the \v{C}ech cohomology groups of the sheaf by $\mathcal{O}_{X}(D)$ $H^i(X,\mathcal{O}_{X}(D))$ (these are finite-dimensional vector spaces over $k$, trivial for $i>1$) and put $h^i(D)=\dim_k H^i(X,\mathcal{O}_{X}(D))$. The zero divisor $D=0$ corresponds to the
structure sheaf $\mathcal{O}_X$. In this case, $h^0(0)=1$ and $h^1(0)=g$ is the
arithmetic genus of $X$. We have
\begin{equation*}
H^0(X,\mathcal{O}_{X}(D))= \A_X(D)\cap F, \quad H^1(X,\mathcal{O}_{X}(D))\simeq\A_X/(\A_X(D)+ F),
\end{equation*}
which is Serre's adelic interpretation of cohomology \cite[Ch. II, \S5]{Serre}.

\subsection{Differentials and residues\label{differentials}} 
Let $R$ be a ring over $k$. The module $\Omega^1_{R/k}$ of K\"{a}hler differentials of $R$ is
the universal $R$-module with the property that there is a $k$-linear map $d:R\rightarrow\Omega^1_{R/k}$
satisfying the Leibniz rule 
$$d(fg)=fdg + gdf, \quad f,g\in R.$$

When $R=F$ is the field $k(X)$ of rational functions on an algebraic curve $X$, $\Omega^1_{F/k}$ is a one-dimensional vector space over $F$. 
Let $t\in F$ be a local coordinate at $P$ in the Zariski topology,  that is, a rational function on $X$ with $v_P(t)=1$.
Then $dt$ is a basis of the $F$-vector space $\Omega^1_{F/k}$, that is, every
K\"{a}hler differential can be written as $\omega=fdt$ for some $f\in F$. The order of
$\omega\in\Omega^1_{F/k}$ at $P$ is defined by
\begin{equation*}
v_P(\omega)=v_P(f).
\end{equation*}
It is independent of the choice of the local coordinate at $P$ and determines a
valuation on $\Omega^1_{F/k}$.

The family of $O_P$-modules $\Omega^1_{O_P/k}$ for all points $P\in X$ forms an algebraic
coherent sheaf $\underline{\Omega}$, a subsheaf of the constant sheaf
$\underline{\Omega^1_{F/k}}$. Moreover,
\begin{equation*}
\Omega^1_{F/k} = \Omega^1_{O_P/k}\underset{{O}_P}{\otimes} F.
\end{equation*}

When $k$ has characteristic 0,  the
$F_P$-module $\Omega^1_{F_P/k}$ is an infinite-dimensional $F_P$-vector space for every point $P\in X$ (the
map $d$ is not continuous with respect to the $\mathfrak{p}$-adic topology on
$F_P$). Following \cite[Ch. II, \S 11]{Serre}, we define
\begin{equation*}
\tilde\Omega^1_{F_P/k} = \Omega^1_{F_P/k}/\mathcal{Q},
\end{equation*}
where $\mathcal{Q}=\cap_{n\geq 0}\,\pp^n d(\mathcal{O}_P)$ and, therefore, 
$\dim_{F_P}\tilde\Omega^1_{F_P/k}=1$.
The resulting $F_P$-module $\tilde\Omega^1_{F_P/k}$ is the completion of the $F$-module
$\Omega^1_{F/k}$ with respect to the valuation $v_P$. The completion of the
$O_P$-module $\Omega^1_{O_P/k}$ is the $\mathcal{O}_P$-module
$\tilde\Omega^1_{\mathcal{O}_P/k}$ and
\begin{equation*}
\tilde\Omega^1_{F_P/k} =
\tilde\Omega^1_{\mathcal{O}_P/k}\underset{\mathcal{O}_P}{\otimes} F_P.
\end{equation*}

We define the $\A_X$-module  $\bm{\Omega}_X$ of differential ad\`{e}les of the sheaf $\underline{\Omega}$ by the formula
\begin{equation*}
\bm{\Omega}_X=\coprod_{P\in X}\tilde\Omega^1_{F_P/k}.
\end{equation*}
This is the restricted direct product over all points $P\in X$ of the $F_P$-modules
$\tilde\Omega^1_{F_P/k}$ with respect to the $\mathcal{O}_P$-modules
$\tilde\Omega^1_{\mathcal{O}_P/k}$. The $F$-module $\Omega^1_{F/k}$ is contained in
all $F_P$-modules $\tilde\Omega^1_{F_P/k}$ and is diagonally embedded into
$\bm{\Omega}_X$:
\begin{equation*}
\Omega^1_{F/k}\ni \omega\mapsto\{\omega|_{P}\}_{P\in X} \in\bm{\Omega}_X.
\end{equation*}
The $k$-vector space $\bm{\Omega}_X$ has a linear topology with a base of neighborhoods of zero given by the subspaces $\bm{\Omega}_X(D)$ for all $D\in\Div(X)$:
\begin{equation*}
\bm{\Omega}_X(D)= \{\omega=\{\omega_P\}_{P\in X}
\in\boldsymbol{\Omega}_X : v_P(\omega_P)\geq -v_P(D)\;\;\forall\, P\in
X\}.
\end{equation*}
This topological space is locally linear compact. The maps $d: F_P\rightarrow\tilde\Omega^1_{F_P/k}$ for all $P\in X$ determine a continuous map
$d:\A_X\rightarrow \bm{\Omega}_X$
satisfying the Leibniz rule.
\begin{remark}
The $\A_X$-module $\bm{\Omega}_X$ is essentially the set of `principal part
systems of degree $1$' on $X$ in the sense of Eichler (see \cite[Ch.~III, \S
5.2]{Eichler}).
\end{remark}
Take $\omega \in\tilde\Omega^1_{F_P/k}$, and let $t$ be a local parameter of the field
$F_P$, so that $dt$ is a basis of the $\mathcal{O}_P$-module $\tilde\Omega^1_{\mathcal{O}_P/k}$. The
residue map $\Res_P: \tilde\Omega^1_{F_P/k}\rightarrow k$ is defined as
\begin{equation*}
\Res_P(\omega)=c_{-1},\quad\text{where}\quad\omega=\sum_{n\gg -\infty}^{\infty}c_n t^n dt,
\end{equation*}
and the symbol $n\gg-\infty$ means that the summation is taken over only finitely many negative
values of $n$. The definition of the residue is independent of the choice of the local
parameter. The residue map is continuous with respect to the $\mathfrak{p}$-adic topology on
$\tilde\Omega^1_{F_P/k}$ and the discrete topology on $k$. The local residue maps
$\Res_P$ give rise to the global residue map $\Res: \bm{\Omega}_X\rightarrow
k$,
\begin{equation*}
\Res\,\omega=\sum_{P\in X}\Res_P(\omega_P),\;\;\omega=\{\omega_P\}_{P\in X}
\in\bm{\Omega}_X.
\end{equation*}
The global residue map is well-defined, continuous, and possesses the following
fundamental property.
\begin{theorem}[The residue formula] For every $\omega\in\Omega^1_{F/k}$,
\begin{displaymath}
\Res\,\omega=\sum_{P\in X}\Res_P(\omega|_P) = 0.
\end{displaymath}
\end{theorem}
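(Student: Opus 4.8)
The plan is to prove the residue formula by reducing it to the case of the projective line $\PP^1$ by means of a trace map, and then verifying the $\PP^1$ case by an explicit computation. I first choose a non-constant rational function $x\in F$ for which $F=k(X)$ is a finite separable extension of the rational function field $k(x)$ (such a separating element exists because $k$ is perfect); this corresponds to a finite morphism $\pi\colon X\to\PP^1$. Writing an arbitrary $\omega\in\Omega^1_{F/k}$ as $\omega=f\,dx$ with $f\in F$, I define its trace $\pi_{\ast}\omega=\Tr_{F/k(x)}(f)\,dx\in\Omega^1_{k(x)/k}$, so that the global residue sum on $X$ is compared with the corresponding sum on $\PP^1$.

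The key step, and the main obstacle, is the local compatibility of residues with the trace: for each closed point $Q\in\PP^1$,
\begin{equation*}
\sum_{P\in\pi^{-1}(Q)}\Res_P(\omega)=\Res_Q(\pi_{\ast}\omega).
\end{equation*}
To establish this I would use the canonical isomorphism $F\otimes_{k(x)}k(x)_Q\simeq\prod_{P\mid Q}F_P$, valid because $F/k(x)$ is separable, under which $\Tr_{F/k(x)}$ decomposes as $\sum_{P\mid Q}\Tr_{F_P/k(x)_Q}$. The assertion then reduces to the purely local statement that for a finite extension $F_P/k(x)_Q$ of complete discretely valued fields (with common residue field $k$, as $k$ is algebraically closed) the residue is unchanged under the field trace on coefficients. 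This local residue–trace formula is the technical heart: it requires care with ramification, with the choice of local parameters, and with the $\mathfrak{p}$-adic continuity of both $\Res$ and $\Tr$, and is where essentially all the work lies.

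Granting the local compatibility, summation over all $Q\in\PP^1$ gives
\begin{equation*}
\sum_{P\in X}\Res_P(\omega)=\sum_{Q\in\PP^1}\Res_Q(\pi_{\ast}\omega),
\end{equation*}
so it remains to prove the formula on $\PP^1$. There $\Omega^1_{k(x)/k}$ is spanned over $k(x)$ by $dx$, so every differential has the form $g(x)\,dx$; since $k$ is algebraically closed, partial fractions write $g$ as a finite $k$-linear combination of monomials $x^n$ with $n\ge 0$ and simple fractions $(x-a)^{-m}$ with $a\in k$, $m\ge 1$. By $k$-linearity of $\Res$ it suffices to check each such term directly: expanding at the relevant finite point and at infinity, via the local parameter $s=1/x$, shows that the only nonzero contributions occur for $(x-a)^{-1}\,dx$, where the residue $+1$ at $x=a$ is cancelled by the residue $-1$ at infinity, while $x^n\,dx$ and $(x-a)^{-m}\,dx$ with $m\ge 2$ contribute nothing. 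Hence $\sum_{Q}\Res_Q(g\,dx)=0$, which completes the proof. (An alternative route, matching the adelic spirit of the paper, would replace this reduction by Tate's abstract-trace construction of the residue, but the argument above is the most self-contained.)
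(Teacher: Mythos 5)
You should first know that the paper contains no proof of this theorem: it is stated in Section 2 as standard background (the section opens by saying the material ``can be found in \cite{Chevalley,Serre,Iwasawa}''), and Tate's proof of the general residue theorem is cited in the introduction. So the comparison is with the classical literature rather than with an argument in the text. Your route --- choose a separating element $x$ so that $F/k(x)$ is finite separable, push forward $\omega=f\,dx$ by $\pi_{\ast}\omega=\Tr_{F/k(x)}(f)\,dx$, use $F\otimes_{k(x)}k(x)_Q\simeq\prod_{P\mid Q}F_P$ to localize the trace, and finish on $\PP^1$ by partial fractions --- is exactly the classical Hasse--Chevalley--Serre proof, and its skeleton is sound: the adelic splitting of the trace is valid by separability, and your $\PP^1$ computation is correct (the residue $+1$ of $(x-a)^{-1}dx$ at $a$ cancels the residue $-1$ at infinity, while $x^n\,dx$ and $(x-a)^{-m}\,dx$ with $m\geq 2$ are residue-free everywhere).

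The genuine gap is that the local residue--trace compatibility $\sum_{P\mid Q}\Res_P(\omega)=\Res_Q(\pi_{\ast}\omega)$ is asserted but never proved, and --- as you yourself concede --- in this approach that lemma \emph{is} the theorem; everything else is bookkeeping. Moreover the difficulty is unevenly distributed in a way your sketch obscures. In characteristic zero (the paper's standing hypothesis from Section 3 on) each local extension $F_P/k(x)_Q$ is totally and tamely ramified, so by Hensel's lemma one may choose a uniformizer $t$ of $F_P$ with $t^e$ a uniformizer $u$ of $k(x)_Q$; then $\Tr(t^n)=e\,u^{n/e}$ if $e\mid n$ and $0$ otherwise, and comparing $\Res_P(t^n\,du)=\Res_P(e\,t^{n+e-1}dt)$ with $\Res_Q(\Tr(t^n)\,du)$ (both equal $e\,\delta_{n,-e}$) settles the lemma in a few lines, with continuity of $\Res$ and $\Tr$ extending it from monomials to all of $F_P$ --- you could and should have included this. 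But the theorem as stated sits in Section 2, before the characteristic-zero assumption is imposed, and in characteristic $p$ with wild ramification ($p\mid e$) no compatible choice of uniformizers of this kind exists; the elementary computation fails, which is precisely what makes Serre's careful treatment nontrivial and what motivated Tate's coordinate-free definition of the residue that the paper cites. As written, then, your proposal is a correct outline of the standard proof in the tame case, with its technical heart deferred exactly where deferral is not harmless.
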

\subsection{Serre's duality and the Riemann-Roch theorem\label{R-R}}
We put
\begin{equation*}
\Omega^1_{F/k}(D)= \Omega^1_{F/k}\cap\,\bm{\Omega}_X(D)=
\{\omega\in\Omega^1_{F/k} : v_P(\omega)\geq -v_P(D)~\forall\, P\in X\}
\end{equation*}
and define the residue pairing $(~,~):\bm{\Omega}_X\otimes_K\A_X\rightarrow k$ by the formula
\begin{equation*}
(\omega,x)=\sum_{P\in
X}\Res_P(x_P\omega_P),~\text{where}~\omega\in\bm{\Omega}_X,\, x\in\A_X.
\end{equation*}
The residue pairing has the following properties:
\begin{enumerate}
\item[P1)] $(\omega,x)=0$ if $\omega\in\Omega^1_{F/k}$ and $x\in F$,
\item[P2)] $(\omega,x)=$ if $\omega\in\bm{\Omega}_X(-D)$ and $x\in\A_X(D)$.
\end{enumerate}
It follows from P1), P2) that for
every $D\in\Div(X)$ the formula $\imath(\omega)(x)=(\omega,x)$  determines a $k$-linear map
\begin{equation*}
\imath: \Omega^1_{F/k}(-D)\rightarrow \left(\A_X/(\A_X(D) + F)\right)^\vee,
\end{equation*}
where $V^\vee=\Hom(V,k)$ is the topological dual of a $k$-vector space $V$ with linear
topology.
\begin{theorem}[Serre's duality] For every $D\in\Div(X)$ the map $\imath$ is
an isomorphism. Hence the finite-dimensional $k$-vector spaces $\Omega^1_{F/k}(-D)$ and
$\A_X/(\A_X(D) + F)$ are dual with respect to the residue pairing.
\end{theorem}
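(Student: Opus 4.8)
The plan is to follow Serre's own argument \cite{Serre, Iwasawa}: identify the global \Ka differentials with exactly the continuous functionals on $\A_X$ that kill $F$, and then extract the statement for a fixed $D$ by matching filtrations. First I would dispose of well-definedness and injectivity of $\imath$. Well-definedness is immediate from the listed properties: for $\omega\in\Omega^1_{F/k}(D)=\Omega^1_{F/k}\cap\bm{\Omega}_X(D)$, property \textbf{P2} gives $(\omega,x)=0$ on $\A_X(D)$ and \textbf{P1} gives $(\omega,x)=0$ on $F$, so $x\mapsto(\omega,x)$ descends to a functional on $\A_X/(\A_X(D)+F)$. Injectivity is the nondegeneracy of the residue pairing in the first slot: if $(\omega,x)=0$ for all $x\in\A_X$, then testing against the adele supported at a single $P$ with $x_P=t^{-n-1}$ isolates each Laurent coefficient of $\omega_P$, forcing $\omega=0$.

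For surjectivity I would introduce the $F$-module
\[
I=\{\phi\in\A_X^\vee : \phi|_F=0\},
\]
of continuous functionals on $\A_X$ vanishing on $F$, with $F$ acting by $(f\phi)(x)=\phi(fx)$ (this preserves $I$ because $fF\subset F$). Writing $I(D)=\{\phi\in I:\phi|_{\A_X(D)}=0\}$, Serre's adelic interpretation of $H^1$ identifies $I(D)$ with $(\A_X/(\A_X(D)+F))^\vee$, so $\dim_k I(D)=h^1(D)$, and $I=\bigcup_D I(D)$ since a continuous functional has open kernel and hence kills some $\A_X(D)$. By \textbf{P1} together with the residue formula, every $\omega\in\Omega^1_{F/k}$ yields $(\omega,\cdot)\in I$, and a direct local residue computation shows $(\omega,\cdot)\in I(D)$ precisely when $v_P(\omega)\geq v_P(D)$ for all $P$, i.e. when $\omega\in\Omega^1_{F/k}(D)$. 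This gives an $F$-linear injection $\Omega^1_{F/k}\hookrightarrow I$ carrying $\Omega^1_{F/k}(D)$ into $I(D)$, whose restriction over each $D$ is exactly $\imath$. Since $\dim_F\Omega^1_{F/k}=1$, the whole theorem collapses to the single inequality $\dim_F I\leq 1$: granting it, a nonzero $\omega_0$ gives $I=F\cdot\imath(\omega_0)=\imath(\Omega^1_{F/k})$, so $\imath$ is onto; intersecting with the condition of killing $\A_X(D)$ and using the local computation in reverse then shows $\imath$ maps $\Omega^1_{F/k}(D)$ isomorphically onto $I(D)=(\A_X/(\A_X(D)+F))^\vee$.

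The heart of the matter, and the step I expect to be the main obstacle, is the bound $\dim_F I\leq1$, which I would prove by a dimension count. Suppose $\phi_1,\dots,\phi_r\in I$ are linearly independent over $F$; choosing a common $D$ with all $\phi_i\in I(D)$, one checks for any effective $\Delta$ and any $f_i\in H^0(X,\mathcal{F}(\Delta))=\A_X(\Delta)\cap F$ that $\sum_i f_i\phi_i\in I(D-\Delta)$, a routine valuation computation. The resulting $k$-linear map $\big(H^0(X,\mathcal{F}(\Delta))\big)^{\oplus r}\to I(D-\Delta)$ is injective, immediately from the $F$-independence of the $\phi_i$, so $r\,h^0(\Delta)\leq h^1(D-\Delta)$. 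Finally I would feed in the Euler-characteristic relation $h^0(E)-h^1(E)=\deg E+1-g$, which follows from the basic facts by the ``add a point'' exact sequence and is independent of duality, together with $h^0(E)=0$ for $\deg E<0$: for $\deg\Delta$ large this reads $r(\deg\Delta+1-g)\leq\deg\Delta-\deg D-1+g$, and letting $\deg\Delta\to\infty$ forces $r\leq1$. The only points requiring genuine care are the two local residue computations (pinning down when $(\omega,\cdot)$ lands in $I(D)$, and verifying $\sum_i f_i\phi_i\in I(D-\Delta)$); the rest is linear algebra over $F$ and the asymptotic estimate.
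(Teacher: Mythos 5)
Your proposal is correct, and there is nothing in the paper to compare it against: Serre's duality appears in Section \ref{R-R} as a quoted standard fact, with the proof deferred to the references (\cite{Serre}, also \cite{Iwasawa}). Your argument --- realizing $\Omega^1_{F/k}$ inside the $F$-vector space $I$ of continuous functionals on $\A_X$ killing $F$, matching the filtrations $I(D)\simeq H^1(X,\mathcal{F}(D))^\vee$ against $\Omega^1_{F/k}(D)$, and reducing everything to $\dim_F I\leq 1$ via the injection $\bigl(H^0(X,\mathcal{F}(\Delta))\bigr)^{\oplus r}\hookrightarrow I(D-\Delta)$ and the duality-free Euler characteristic $\chi(E)=\deg E+1-g$ --- is exactly the classical proof in those cited sources, with the circularity point (that $\chi(E)=\deg E+1-g$ precedes duality, as the paper's own Riemann--Roch discussion also assumes) correctly handled.
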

\begin{corollary}[The strong residue theorem] \mbox{}
\begin{enumerate}
\item[(i)] An ad\`{e}le $x\in\A_X$ corresponds to a rational function on $X$ under
the embedding $F\hookrightarrow\A_X$ if and only if $(\omega,x)=0$ for all $\omega\in\Omega^1_{F/k}$.
\item[(ii)] A differential ad\`{e}le $\omega\in\bm{\Omega}_X$ corresponds to a K\"{a}hler differential
on $X$ under the embedding $\Omega^1_{F/k}\hookrightarrow\bm{\Omega}_X$ if and
only if $(\omega,f)=0$ for all $f\in F$.
\end{enumerate}
\end{corollary}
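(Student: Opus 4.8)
The plan is to deduce both statements from Serre's duality, together with the elementary properties \textbf{P1}, \textbf{P2} of the residue pairing and the topological facts recorded above. In both parts the ``only if'' directions are immediate from \textbf{P1}: if $x\in F$ then $(\omega,x)=0$ for every $\omega\in\Omega^1_{F/k}$, and, dually, if $\omega\in\Omega^1_{F/k}$ then $(\omega,f)=0$ for every $f\in F$. So the real content lies in the two ``if'' directions, each of which I would reduce to the finite-dimensional perfect pairing furnished by Serre's duality.

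For part (i), suppose $x\in\A_X$ satisfies $(\omega,x)=0$ for all $\omega\in\Omega^1_{F/k}$. Fix an arbitrary divisor $D$. Since $\Omega^1_{F/k}(D)\subset\Omega^1_{F/k}$, the hypothesis gives $(\omega,x)=0$ for every $\omega\in\Omega^1_{F/k}(D)$, and by \textbf{P2} this pairing descends to $\Omega^1_{F/k}(D)\times\bigl(\A_X/(\A_X(D)+F)\bigr)$. Because $\imath$ maps $\Omega^1_{F/k}(D)$ onto the full dual $\bigl(\A_X/(\A_X(D)+F)\bigr)^\vee$, every linear functional on the finite-dimensional space $\A_X/(\A_X(D)+F)$ is of the form $\imath(\omega)$; as all of these vanish on the class of $x$, that class is zero, i.e. $x\in\A_X(D)+F$. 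Since $D$ was arbitrary,
\[
x\in\bigcap_{D\in\Div(X)}\bigl(\A_X(D)+F\bigr).
\]
Finally, the subspaces $\A_X(D)$ form a base of neighborhoods of $0$ and $\A_X/F$ is linearly compact, in particular separated, so the intersection of the images of the $\A_X(D)$ in $\A_X/F$ is trivial; equivalently the displayed intersection equals $F$, whence $x\in F$.

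For part (ii), suppose $\omega\in\bm{\Omega}_X$ satisfies $(\omega,f)=0$ for all $f\in F$. First I would choose a divisor $D$ with $\omega\in\bm{\Omega}_X(D)$, which is possible since every component $\omega_P$ has finite order and $v_P(\omega_P)\geq 0$ for all but finitely many $P$; concretely one may take $v_P(D)=\min\{0,v_P(\omega_P)\}$. By \textbf{P2} the functional $x\mapsto(\omega,x)$ then annihilates $\A_X(D)$, and by hypothesis it annihilates $F$, so it descends to a functional on $\A_X/(\A_X(D)+F)$. Using the surjectivity of $\imath$, there is a genuine K\"{a}hler differential $\omega'\in\Omega^1_{F/k}(D)\subset\Omega^1_{F/k}$ with $(\omega',x)=(\omega,x)$ for all $x\in\A_X$, i.e. $(\omega-\omega',x)=0$ for every $x\in\A_X$. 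I would then invoke the local nondegeneracy of the residue: if the component $(\omega-\omega')_P$ were nonzero for some $P$, then taking $x$ supported at $P$ with $x_P=t^{-v_P((\omega-\omega')_P)-1}$ for a local parameter $t$ would yield $(\omega-\omega',x)=\Res_P(x_P(\omega-\omega')_P)\neq 0$. Hence $\omega=\omega'\in\Omega^1_{F/k}$.

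The routine ingredients are the perfectness of the Serre pairing over each $D$ and the local nondegeneracy of $\Res_P$. The one genuinely topological step---and the point I would treat most carefully---is the passage in part (i) from ``$x\in\A_X(D)+F$ for every $D$'' to ``$x\in F$'': this is precisely where the linear compactness (separatedness) of $\A_X/F$, rather than mere discreteness of $F$, enters, and it plays the role that local nondegeneracy of $\Res_P$ plays in part (ii). Both parts may be summarized as the assertion that, under the identification of $\bm{\Omega}_X$ with the topological dual $\A_X^\vee$ given by the residue pairing, the subspaces $F\subset\A_X$ and $\Omega^1_{F/k}\subset\bm{\Omega}_X$ are exact mutual annihilators.
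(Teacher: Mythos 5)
Your proof is correct, but it takes a genuinely different route from the paper's, especially in part (ii). In part (i) you and the paper agree on the main step: apply Serre's duality over every divisor $D$ to conclude $x\in\A_X(D)+F$ for all $D$. The paper then finishes algebraically, observing that $F\cap\A_X(D)=0$ for $D<0$ (so the $F$-component in the decompositions stabilizes as $D$ decreases, and the difference lies in $\bigcap_D\A_X(D)=0$), whereas you finish topologically, using that the linearly compact quotient $\A_X/F$ is separated, so that the images of the $\A_X(D)$ have trivial intersection and $\bigcap_D\bigl(\A_X(D)+F\bigr)=F$. These are two dressings of the same underlying fact---discreteness of $F$ in $\A_X$ is exactly what makes the quotient Hausdorff---and both are available from the facts recorded in Section 2.1. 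The real divergence is in part (ii): the paper never reapplies duality there. It picks a nonzero $\omega_0\in\Omega^1_{F/k}$, sets $x=\omega/\omega_0\in\A_X$ (legitimate since each $\tilde\Omega^1_{F_P/k}$ is one-dimensional over $F_P$), notes $0=(\omega,f)=(f\omega_0,x)$ for all $f\in F$, and since $f\omega_0$ exhausts $\Omega^1_{F/k}$ (as $\dim_F\Omega^1_{F/k}=1$), part (i) yields $x\in F$ and hence $\omega=x\omega_0\in\Omega^1_{F/k}$. You instead choose a divisor $D$ dominating $\omega$, use the surjectivity of $\imath$ to produce $\omega'\in\Omega^1_{F/k}(D)$ with $(\omega-\omega',x)=0$ for all $x\in\A_X$, and kill $\omega-\omega'$ by the local nondegeneracy of $\Res_P$; all of these steps are sound (your test adele supported at a single point does the job). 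The paper's two-line reduction is shorter and exploits a special feature of curves, namely that $\Omega^1_{F/k}$ is free of rank one over $F$; your argument is longer but structurally parallel to part (i), makes explicit the clean duality statement that $F$ and $\Omega^1_{F/k}$ are exact mutual annihilators under the residue pairing, and does not depend on being able to divide by a single global differential.
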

\begin{proof}
Suppose that the condition (i) holds. It follows from Serre's duality that $x\in\A_X(D) + F$ for every
$D\in\Div(X)$ and, since $F\cap\A_X(D)=0$ for $D<0$, we have $x\in F$. To prove (ii), take $\omega_0\in\Omega^1_{F/k},\,\omega_0\neq 0$. Putting $x=\omega/\omega_0\in\A_X$,
we have $0=(\omega,f)=(f\omega_0,x)$ for all $f\in F$, whence $x\in F$ by part (i).
\end{proof}
\begin{remark}
The strong residue theorem is stated in a slightly different form in \cite[Ch. III, \S 5.3]{Eichler}.
\end{remark}
For $\omega\in\Omega^1_{F/k}$ we put
\begin{equation*}
(\omega)=\sum_{P\in X}v_P(\omega)\cdot P\in\Div(X).
\end{equation*}
Since $\dim_F \Omega^1_{F/k}=1$, all divisors $(\omega)$ are linear equivalent and determine a divisor class $K\in\text{Cl}(X)$, the canonical class of $X$. The following result is obtained by combining the Riemann-Roch formula for the Euler characteristic of a divisor $D$:
\begin{equation*}
\chi(D)=h^0(D) - h^1(D)=\deg D + 1 - g,
\end{equation*}
with Serre duality and the adelic interpretation of cohomology.
\begin{theorem}[Riemann-Roch theorem] For every $D\in\Div(X)$ we have
\begin{equation*}
h^0(D) - h^0(K-D) = \deg D + 1-g.
\end{equation*}
\end{theorem}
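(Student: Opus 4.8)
The plan is to reduce the statement entirely to two results already in hand --- Serre's duality and Serre's adelic interpretation of the cohomology --- so that the only genuine computation is the identification $h^1(D)=h^0(K-D)$. I take as given the Euler characteristic formula $\chi(D)=h^0(D)-h^1(D)=\deg D+1-g$ quoted just above the statement (one establishes it separately by induction on $\deg D$, adding a single point at a time so that the point-adding short exact sequence of sheaves changes both $\chi$ and $\deg$ by $1$, with base case $h^0(0)=1,\ h^1(0)=g$ from Section~\ref{Basic facts}). Granting this, it suffices to rewrite the term $h^1(D)$ as $h^0(K-D)$ and substitute.

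First I would invoke Serre's adelic interpretation of the cohomology, $H^1(X,\mathcal{F}(D))\simeq\A_X/(\A_X(D)+F)$, so that $h^1(D)=\dim_k\bigl(\A_X/(\A_X(D)+F)\bigr)$. Next, Serre's duality supplies the isomorphism $\imath:\Omega^1_{F/k}(D)\xrightarrow{\sim}\bigl(\A_X/(\A_X(D)+F)\bigr)^\vee$ induced by the residue pairing. Since both $\Omega^1_{F/k}(D)$ and $\A_X/(\A_X(D)+F)$ are finite-dimensional $k$-vector spaces, a space and its dual have equal dimension, and therefore $\dim_k\Omega^1_{F/k}(D)=h^1(D)$.

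It remains to identify $\dim_k\Omega^1_{F/k}(D)$ with $h^0(K-D)$. Fix a nonzero $\omega_0\in\Omega^1_{F/k}$, so that $(\omega_0)$ is a representative of the canonical class $K$; since $\dim_F\Omega^1_{F/k}=1$, the assignment $\omega\mapsto\omega/\omega_0=:f\in F$ is a $k$-linear isomorphism $\Omega^1_{F/k}\xrightarrow{\sim}F$. Under it one has $v_P(\omega)=v_P(f)+v_P(\omega_0)$ for every $P\in X$, so the defining condition $v_P(\omega)\geq v_P(D)$ of $\Omega^1_{F/k}(D)$ becomes $v_P(f)\geq v_P(D)-v_P(\omega_0)=-v_P((\omega_0)-D)$; that is, $f\in\A_X((\omega_0)-D)\cap F=H^0(X,\mathcal{F}((\omega_0)-D))$. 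Because $(\omega_0)-D$ represents $K-D$ and linearly equivalent divisors give isomorphic sheaves, this yields $\dim_k\Omega^1_{F/k}(D)=h^0(K-D)$. Combining with the preceding paragraph gives $h^1(D)=h^0(K-D)$, and substitution into the Euler characteristic formula produces $h^0(D)-h^0(K-D)=\deg D+1-g$.

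The delicate points here are bookkeeping rather than conceptual: one must justify the finite-dimensionality used to equate $\dim_k V=\dim_k V^\vee$ (this is exactly the finiteness of the cohomology groups recalled in Section~\ref{Basic facts}) and check the valuation identity $v_P(\omega)=v_P(f)+v_P(\omega_0)$, which is immediate once $\omega=f\omega_0$. If instead the Euler characteristic formula is \emph{not} granted, then the main obstacle shifts to proving its additivity, for which the adelic exact sequences of Section~\ref{Basic facts} are the essential tool; once that formula is available, the argument above is purely formal.
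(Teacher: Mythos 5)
Your proposal is correct and follows exactly the route the paper itself indicates: it derives the theorem by combining the Euler characteristic formula $\chi(D)=h^0(D)-h^1(D)=\deg D+1-g$ with Serre's adelic interpretation $H^1(X,\mathcal{F}(D))\simeq\A_X/(\A_X(D)+F)$ and Serre's duality, identifying $h^1(D)=\dim_k\Omega^1_{F/k}(D)=h^0(K-D)$ via division by a fixed nonzero differential $\omega_0$. The paper compresses this into a single sentence before the theorem statement; your write-up simply supplies the same steps in full detail, and the bookkeeping (finite-dimensionality, the valuation identity $v_P(f\omega_0)=v_P(f)+v_P(\omega_0)$, and invariance of $h^0$ under linear equivalence) is handled correctly.
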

An effective divisor $D$ on $X$ is called \emph{non-special} if $h^0(K-D)=0$. It follows from
the Riemann-Roch theorem that an effective divisor $D$ of degree $g$ is non-special if
and only if $h^0(D)=1$. In other words, the only rational functions whose poles are contained in an effective non-special divisor of degree $g$ are constant functions. 

\section{Differential and Integral Calculus\label{Calculus}}

From now on we assume that the algebraically closed field $k$ has characteristic 0 and the algebraic curve $X$ has genus $g\geq1$.
\subsection{Differentials of the second kind and `additive functions'\label{A-functions}}
\label{second kind}
Following the classical terminology, we call a K\"{a}hler differential $\omega\in\Omega^1_{F/k}$
\emph{a differential of the second kind} if $\Res_P \omega = 0$ for all $P\in X$. The
$k$-vector space $\Omega^{(2\text{nd})}$ of differentials of the second kind on $X$ carries
a canonical skew-symmetric bilinear form $(~,~)_X$ defined as follows. For every
$\omega\in\Omega^{(\mathrm{2nd})}$ let $x=\{x_P\}_{P\in X}\in\A_X$ be an ad\`{e}le satisfying the equality
\begin{equation*}
d\,x_P = \left.\omega\right|_P\;\;\forall\, P\in X.
\end{equation*}
For every $P\in X$ there is a unique (up to an additive constant
in $k$) element $x_P\in F_P$ with this property, and we have $x_P\in\mathcal{O}_P$ for all but finitely many $P\in X$. We define
$x=d^{-1}\omega$ and put
\begin{equation*}
(\omega_1,\omega_2)_X=\sum_{P\in X}
\Res_P(d^{-1}\omega_1\,\omega_2),\;\;\omega_1,\omega_2\in \Omega^{(\mathrm{2nd})}.
\end{equation*}
The bilinear form $(~,~)_X$ is independent of the choice of the additive constants in the
definition of $d^{-1}$ and is skew-symmetric. When $X$ is a Riemann surface, the bilinear form $(~,~)_X$ corresponds to the standard pairing in the cohomology under the isomorphism $\Omega^{(\mathrm{2nd})}/dF\simeq H_{\mathrm{dR}}^{1}(X)$ (see \cite[Ch. III, \S5]{Griffiths-Harris}).

The infinite-dimensional $k$-vector space $\Omega^{(\mathrm{2nd})}$ has a $g$-dimensional
subspace $\Omega^{(\mathrm{1st})}=\Omega^1_{F/k}(0)$ of the differentials of the first kind.
The infinite-dimensional subspace $\Omega^{(\mathrm{1st})}\oplus dF$ of $\Omega^{(\mathrm{2nd})}$ is isotropic with respect
to the bilinear form $(~,~)_X$. Since there is no canonical choice of the isotropic complementary subspace to
$\Omega^{(\mathrm{1st})}\oplus dF$ in $\Omega^{(\mathrm{2nd})}$, the exact sequence
\begin{equation*}
0\rightarrow \Omega^{(\mathrm{1st})}\oplus dF\rightarrow\Omega^{(\mathrm{2nd})}
\rightarrow\Omega^{(\mathrm{2nd})}/(\Omega^{(\mathrm{1st})}\oplus dF)\rightarrow 0
\end{equation*}
does not split canonically. Nevertheless we have the following
fundamental result (see \cite[Ch. VI, \S 8]{Chevalley} and \cite[Ch. III, \S\S 5.3-5.4]{Eichler}), which may be regarded as an algebraic de Rham theorem.  
\begin{theorem} \label{Chevalley} \mbox{}
\begin{enumerate}
\item[(i)] The restriction of the bilinear form $(~,~)_X$ to $\Omega^{(\mathrm{2nd})}/dF$ is non-degenerate and
\begin{displaymath}
\dim_k \Omega^{(\mathrm{2nd})}/dF =2g.
\end{displaymath}
\item[(ii)]
For every effective non-special effective divisor $D$ on $X$ of degree $g$ there is an isomorphism
\begin{equation*}
\Omega^{(\mathrm{2nd})}/dF\simeq\Omega^{(\mathrm{2nd})}\cap \Omega^1_{F/k}(2D).
\end{equation*}
\item[(iii)] Let $D=P_1 + \dots +P_g$ be a non-special divisor with distinct points. Then every choice of local uniformizers $t_i$ at $P_i$ determines a symplectic basis $\{\theta_i,\omega_i\}_{i=1}^g$ of the $k$-vector space $\Omega^{(\mathrm{2nd})}\cap \Omega^1_{F/k}(2D)$ with respect to the symplectic from $(~,~)_X$:
\begin{equation*}
(\theta_i,\theta_j)_X=(\omega_i,\omega_j)_X=0,\;\;(\theta_i,\omega_j)_X=
\delta_{ij},\quad i,j=1,\dots,g.
\end{equation*}
This basis consists of differentials $\theta_i$ of the first kind and differentials $\omega_i$ of the second kind which are uniquely determined by the conditions
\begin{equation*}
v_{P_i}\left(\theta_j -\delta_{ij}dt_i\right)>0\;\;\text{and}\;\;
v_{P_i}\left(\omega_j -\delta_{ij}t^{-2}_i dt_i\right)>0,
\end{equation*}
where $i,j=1,\dots,g$.
\item[(iv)] The subspace $k \cdot\omega_1\oplus\dots\oplus k\cdot\omega_g$ is an
 isotropic complement to $\Omega^{(\mathrm{1st})}\oplus dF$ in $\Omega^{(\mathrm{2nd})}$.
\end{enumerate}
\end{theorem}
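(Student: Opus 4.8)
The plan is to reduce all four parts to Riemann--Roch dimension counts carried out on the finite-dimensional truncations $W_E:=\Omega^{(\mathrm{2nd})}\cap\Omega^1_{F/k}(-E)$ (second-kind differentials with poles bounded by the effective divisor $E$), and to use that $\Omega^{(\mathrm{2nd})}=\bigcup_E W_E$, so that $\Omega^{(\mathrm{2nd})}/dF$ is the increasing union of the subspaces $W_E/(W_E\cap dF)$. First I would check that $(~,~)_X$ descends to the quotient: for $f\in F$ and $\omega\in\Omega^{(\mathrm{2nd})}$ one has $d^{-1}(df)=f$ locally, so $(df,\omega)_X=\sum_P\Res_P(f\omega)=0$ by the residue theorem (using $\Res_P\omega=0$), and skew-symmetry gives $(\omega,df)_X=0$. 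Thus it remains to compute dimensions and evaluate the form on an explicit model.

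The engine is the following count. For effective $E$ with $s=\#\,\mathrm{supp}(E)$ and reduced divisor $E_{\mathrm{red}}=\sum_{P\in\mathrm{supp}(E)}P$, Riemann--Roch gives $\dim_k\Omega^1_{F/k}(-E)=h^0(K+E)=g-1+\deg E$. The residue map $\Omega^1_{F/k}(-E)\to k^{s}$ lands in the hyperplane $\{\sum r_P=0\}$ by the residue theorem and is onto it: on the subspace $\Omega^1_{F/k}(-E_{\mathrm{red}})$, of dimension $g-1+s$, the kernel of the residue map is exactly $\Omega^{(\mathrm{1st})}$ (a simple pole with zero residue is no pole), so the image already has dimension $s-1$. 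Hence $\dim_k W_E=(g-1+\deg E)-(s-1)=g+\deg E-s$. Likewise $W_E\cap dF$ corresponds, via $f\mapsto df$ with kernel the constants, to $H^0(E-E_{\mathrm{red}})/k$, so $\dim_k(W_E\cap dF)=h^0(E-E_{\mathrm{red}})-1=\deg E-s-g+h^0(K-E+E_{\mathrm{red}})$. Subtracting,
\[
\dim_k\bigl(W_E/(W_E\cap dF)\bigr)=2g-h^0(K-E+E_{\mathrm{red}})\le 2g .
\]
Taking the union over $E$ gives $\dim_k\Omega^{(\mathrm{2nd})}/dF\le 2g$. For the non-special $D=P_1+\dots+P_g$ with distinct points one has $s=g$, $E_{\mathrm{red}}=D$ for $E=2D$, and $h^0(K-D)=0$, so $\dim_k(W_{2D}/(W_{2D}\cap dF))=2g$; moreover $W_{2D}\cap dF=0$, since a function with at most simple poles along $D$ lies in $H^0(D)=k$. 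Therefore $W_{2D}\hookrightarrow\Omega^{(\mathrm{2nd})}/dF$ is injective from a $2g$-dimensional space, forcing $\dim_k\Omega^{(\mathrm{2nd})}/dF=2g$ and making this map an isomorphism; this is the dimension statement in (i) and the isomorphism in (ii).

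For (iii) and (iv) I would work inside the model $W_{2D}$. Writing $\omega$ near $P_i$ as $(a_i t_i^{-2}+b_i+O(t_i))\,dt_i$ (the $t_i^{-1}$-coefficient vanishing as $\omega$ is of the second kind), define $\Phi:W_{2D}\to k^{2g}$ by $\Phi(\omega)=((a_i),(b_i))$. If $\Phi(\omega)=0$ then $\omega$ is holomorphic and vanishes at every $P_i$, so $\omega\in\Omega^1_{F/k}(D)$ with $\dim=h^0(K-D)=0$; hence $\Phi$ is injective, thus an isomorphism. Setting $\theta_i=\Phi^{-1}(0,e_i)$ and $\omega_i=\Phi^{-1}(e_i,0)$ yields exactly the first- and second-kind differentials with the stated leading terms and the vanishing $v_{P_i}(\omega_j)>0$ for $i\ne j$. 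The pairings are local residue computations: $(\theta_i,\theta_j)_X=0$ because $d^{-1}\theta_i$ is everywhere holomorphic; $(\theta_i,\omega_j)_X$ has a contribution only at $P_j$, where $d^{-1}\theta_i=\delta_{ij}t_j+\cdots$ against $\omega_j\sim t_j^{-2}dt_j$ gives $\delta_{ij}$; and $(\omega_i,\omega_j)_X=0$ because at $P_i$ the factor $\omega_j$ vanishes while at $P_j$ the antiderivative $d^{-1}\omega_i$ has no linear term (as $\omega_i$ vanishes there) and $\omega_j$ has no $t_j^{-1}$-term, so both residues die. This symplectic basis gives the non-degeneracy in (i); since the $\bar\theta_i$ span the image of $\Omega^{(\mathrm{1st})}$ in $\Omega^{(\mathrm{2nd})}/dF$, the $\omega_i$ span an isotropic complement to $\Omega^{(\mathrm{1st})}\oplus dF$, which is (iv).

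The main obstacle is the surjectivity half of (ii), i.e.\ the exact value $\dim_k\Omega^{(\mathrm{2nd})}/dF=2g$: producing representatives by hand fails because removing a pole of order $\ge 2$ by subtracting some $df$ is obstructed by $H^1$. The displayed formula circumvents this uniformly, and the reason the bound is \emph{attained} at $E=2D$ (not merely in the limit) is precisely non-speciality of $D$, which makes $h^0(K-D)=0$ and, simultaneously, $W_{2D}\cap dF=0$ when $D$ is reduced. The one remaining delicate point is the antisymmetric vanishing $(\omega_i,\omega_j)_X=0$, where all three normalizations—the $a$-, the $b$-, and the second-kind condition—must be used together to cancel the residues at both $P_i$ and $P_j$.
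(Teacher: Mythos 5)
Your proposal is correct, and it reaches the dimension statement and part (ii) by a genuinely different route than the paper. The paper proves (ii) first and \emph{constructively}: given $\omega\in\Omega^{(\mathrm{2nd})}$ it assembles an adele of local antiderivative corrections at the poles and invokes Serre duality in the adelic form $\A_X(D)+F=\A_X$ (valid precisely because $D$ is non-special, i.e. $\Omega^1_{F/k}(D)=0$) to produce a global $f\in F$ with $(\omega-df)\geq -2D$, uniqueness of $f$ coming from $h^0(D)=1$; the count $\dim_k\Omega^{(\mathrm{2nd})}\cap\Omega^1_{F/k}(-2D)=2g$ is then extracted from the sum--intersection identity relating this space to $\Omega^1_{F/k}(-D)$, $\Omega^1_{F/k}(-2D)$ and $\Omega^{(\mathrm{1st})}$ via the third-kind differentials, giving $(3g-1)-(2g-1)+g=2g$. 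You avoid constructing any representative: your uniform truncation formula $\dim_k\bigl(W_E/(W_E\cap dF)\bigr)=2g-h^0(K-E+E_{\mathrm{red}})$, proved by Riemann--Roch together with surjectivity of the residue map onto the sum-zero hyperplane, bounds the quotient by $2g$ over the exhausting family of truncations, and then the squeeze --- $W_{2D}$ is $2g$-dimensional and meets $dF$ trivially --- forces the inclusion $W_{2D}\hookrightarrow\Omega^{(\mathrm{2nd})}/dF$ to be an isomorphism. Your route buys a purely Riemann--Roch argument with an exact defect formula that isolates non-specialty of $D$ as precisely the condition $h^0(K-D)=0$ for attaining $2g$; the paper's route buys an explicit Mittag--Leffler-style reduction procedure (the adele $x$ and the function $f$), which is more in the spirit of the adelic machinery used in the rest of the paper. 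For (iii) your map $\Phi$ is literally the paper's map $L$, with the same injectivity argument from $\Omega^1_{F/k}(D)=0$; what you add is the explicit verification of the symplectic table $(\theta_i,\theta_j)_X=(\omega_i,\omega_j)_X=0$, $(\theta_i,\omega_j)_X=\delta_{ij}$, which the paper's written proof leaves to the reader, and your bookkeeping there --- the vanishing of $\omega_j$ at $P_i$, the absence of a linear term in $d^{-1}\omega_i$ at $P_j$, and the second-kind condition killing the $t_j^{-1}$ coefficient, with local integration constants harmless because $\Res_P\omega_j=0$ --- is exactly right; note only that the case $i=j$ of $(\omega_i,\omega_i)_X=0$ rests on skew-symmetry of the form, which you use but do not re-derive (the paper states it, without proof, just before the theorem).
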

\begin{proof} Let $(\omega)_{\infty}=n_{1}Q_{1}+\dots+n_{l}Q_{l}$ be the polar divisor of $\omega\in\Omega^{(\mathrm{2nd})}$. Since $\mathrm{char}\,k=0$, for every $Q_{i}$ there is an $f_{i}\in F$ such that $v_{Q_{i}}(\omega-df_{i})\geq 0$. We define $x=\{x_{P}\}_{P\in X}\in\A_{X}$ by the formulae
\begin{equation*}
x_{P}=\begin{cases} \left. f_{i}\right|_{Q_{i}}, & P=Q_{i},\quad i=1,\dots,l,\\
0, & P\neq Q_{i},\quad i=1,\dots,l.
\end{cases}
\end{equation*}
Since $D$ is a non-special divisor of degree $g$, we have $\Omega^{1}_{F/k}(D)=\{0\}$ and, by Serre duality, $\A_{X}(D)+F=\A_{X}$. Thus there in an $f\in F$ with the property $v_{P}(f-x)\geq -v_{P}(D)$ for all $P\in X$, whence $(\omega-df)\geq -2D$. Since $D$ is non-special such an $f$ is unique. This proves part (ii). 

To prove (i), we observe that by the Riemann-Roch theorem, 
$$\dim_{k}\Omega^{1}_{F/k}(2D)=3g-1,\quad \dim_{k}\Omega^{1}_{F/k}(D)=2g-1.$$ 
Let $\Omega^{(\mathrm{3rd})}$ be the $k$-vector space of differentials of the third kind. This subspace of $\Omega^{1}_{F/k}$ is formed by the differentials with only simple poles. Since $\Omega^{(\mathrm{2nd})}\cap\Omega^{(\mathrm{3rd})}=\Omega^{(\mathrm{1st})}$ and $\Omega^{(\mathrm{3rd})}\cap \Omega^1_{F/k}(2D)=\Omega^1_{F/k}(D)$, we conclude that
\begin{gather*} \dim_{k}\Omega^{(\mathrm{2nd})}\cap\Omega^{1}_{F/k}(2D)+\dim_{k}\Omega^1_{F/k}(D) \\=\dim_{k} \Omega^1_{F/k}(2D)+\dim_{k}\Omega^{(\mathrm{1st})}.
\end{gather*}
Using (ii), we have
$$\dim_{k}\Omega^{(\mathrm{2nd})}/dF=(3g-1)-(2g-1)+g=2g.$$

To complete the proof, we define a $k$-linear map
$$L: \Omega^{(\mathrm{2nd})}\cap \Omega^1_{F/k}(2D)\rightarrow k^{2g}$$ 
by the formula $L(\omega)=(\alpha_{1}(\omega),\dots,\alpha_{g}(\omega),\beta_{1}(\omega),\dots,\beta_{g}(\omega))$, where 
\begin{equation*}
v_{P_i}\bigl(\omega -(\alpha_i(\omega)t^{-2}_i + \beta_i(\omega)dt_i)\bigr) > 0,\quad i=1,\dots,g.
\end{equation*}
Since $D$ is non-special, $L$ is an injective map and hence an isomorphism.
The differentials $\omega_i$ and $\theta_i$ are obtained by choosing the only non-zero component of $L$ to be $\alpha_i=1$ and $\beta_i=1$ respectively.
\end{proof}
\begin{remark} The choice of a non-special effective divisor $D=P_{1}+\dots +P_{g}$ on $X$ with distinct points $P_{i}$ and uniformizers $t_{i}$ may be regarded as
an algebraic analogue of the choice of  $a$-cycles on a compact Riemann surface
of genus $g\geq 1$. Correspondingly, the differentials $\theta_i$ are analogues of differentials
of the first kind with normalized $a$-periods, and differentials $\omega_i$ are
analogues of differentials of the second kind with second order poles, zero
$a$-periods and normalized $b$-periods. The symplectic property of the basis
$\{\theta_i,\omega_i\}_{i=1}^g$ is an analogue of the reciprocity law for
differentials of the first kind and the second kind (see \cite[Ch.~5, \S1]{Iwasawa} and \cite[Ch.~VI, \S3]{Kra}).
\end{remark}
\begin{remark} It is not necessary to require all the points of the non-special effective divisor $D$ of degree $g$ to be distinct. Theorem \ref{Chevalley} and all other results in this paper can be easily modified to include divisors with multiple points.
\end{remark}
A differential $\omega$ of the second kind is said to have zero $a$-periods if
\begin{displaymath}
(\omega,\omega_i)_X=0,\quad i=1,\dots,g.
\end{displaymath}
It follows from Theorem \ref{Chevalley} that differential of the first kind with zero
$a$-periods is zero. The vector space $\Omega^{(\mathrm{2nd})}_0$ of differentials of the
second kind with zero $a$-periods has the following properties.
\begin{proposition} \label{Second} \mbox{}
\begin{itemize}
\item[(i)] The $k$-vector space $\Omega^{(\mathrm{2nd})}_0$ is an isotropic complement of $\Omega^{(\mathrm{1st})}$ in $\Omega^{(\mathrm{2nd})}$ and
\begin{equation*}
\Omega^{(\mathrm{2nd})}_0 = k\cdot\omega_1\oplus\dots\oplus k\cdot\omega_g\oplus dF.
\end{equation*}
\item[(ii)] For every $P\in X$ the $k$-vector space $\Omega_0(\ast\,P)$ of differentials of the second kind with zero $a$-periods and the only pole at $P$ has a natural filtration
\begin{displaymath}
\{0\} = \Omega_0(P)\subset\Omega_0(2P)\dots\subset \Omega_0(n P)\subset\dots,
\end{displaymath}
\begin{displaymath}
\dim_k \Omega_0(n P)=n-1.
\end{displaymath}
\item[(iii)] There is a direct sum decomposition
\begin{displaymath}
\Omega^{(\mathrm{2nd})}_0 = \bigoplus_{P\in X} \Omega_0 (\ast\,P).
\end{displaymath}
\item[(iv)] Every differential $\omega\in\Omega_0(n P)$ can be written uniquely as 
\begin{equation*}
\omega = d f + \sum_{i=1}^g c_i\omega_i,
\end{equation*}
where $f\in H^{0}(X,\mathcal{O}_{X}(D+(n-1)P))$.
\end{itemize}
\end{proposition}
\begin{proof} Part (i) follows from Theorem \ref{Chevalley} because $D$ is
non-special. Since $\dim_k \Omega^1_{F/k}(n
P)=n-1+g$, part (ii) follows from the decomposition
\begin{displaymath}
\Omega^1_{F/k}(n P) = \Omega_0(nP)\oplus\Omega^{(\mathrm{1st})}.
\end{displaymath}
Part (iii) follows from part (ii) because every differential $\omega\in\Omega^{(\mathrm{2nd})}_0$ can
be uniquely written as the sum of its principal parts at the poles. Since the divisor $D=P_1 +\dots + P_g$ is
non-special, we have 
$h^0(D+(n-1)P)= n$, and part (iv)
also follows from Theorem \ref{Chevalley}.
\end{proof}
\begin{definition} A space of \emph{additive multi-valued functions on} $X$ (additive functions for brevity) is a subspace $\mathcal{A}(X)\subset\A_X$ with the following properties.
\begin{itemize}
\item[AF1)] $F\subseteq\mathcal{A}(X)$.
\item[AF2)] If $a \in\mathcal{A}(X)$, then $da=\omega\in\Omega^1_{F/k}$
(and hence $\omega\in\Omega^{(\mathrm{2nd})}$).
\item[AF3)] If $a\in\mathcal{A}(X)$ and $da=0$, then $a=c\in k$. 
\end{itemize}
\end{definition}
\begin{remark} For every differential $\omega\in\Omega^{(\mathrm{2nd})}$, the corresponding ad\`{e}le $a=\{a_P\}_{P\in X}=d^{-1}\omega$ is
determined uniquely up to the choice of additive constants for every $P\in X$. Condition
AF3) guarantees that for all $f\in F$ these constants are
compatible with the equation $f = d^{-1}(d f) +c$. 
\end{remark}
\begin{example} \label{Additive} 
Given any non-special effective divisor $D=P_1 + \dots + P_g$  of degree $g$
on $X$ with distinct points $P_{i}$, and any choice of the local uniformizers $t_{i}$ at $P_{i}$, we have the following space $\mathcal{A}(X;D)$ of additive functions with zero $a$-periods. 
Let $\eta_i\in\A_X$ be solutions of the equations 
$$d\eta_i = \omega_i, \quad i=1,\dots,g, $$ 
with any fixed choice of the additive constants at all points $P\in X$. 
Since the divisor $D$ is non-special, the subspaces
$k\cdot\eta_1\oplus\dots\oplus k\cdot\eta_g$ and $F$ of the $k$-vector space $\A_{X}$ have zero intersection. Their direct sum
\begin{equation} \label{additive space}
\mathcal{A}(X;D) =k\cdot\eta_1\oplus\dots\oplus k\cdot\eta_g\oplus F \subset\A_X
\end{equation}
possesses properties AF1)-AF3) and the map $d: \mathcal{A}(X;D) \rightarrow\Omega^{(\mathrm{2nd})}_0$ is surjective. Indeed,
by Proposition \ref{Second} every differential 
$\omega\in\Omega^{(\mathrm{2nd})}_0$ can be written uniquely in the form
\begin{equation} \label{dec-1}
\omega = df + \sum_{i=1}^g c_i\omega_i,
\end{equation}
whence 
\begin{equation} \label{dec-2}
a=d^{-1}\omega = f + \sum_{i=1}^g c_i\eta_i + c  \in\mathcal{A}(X;D).
\end{equation}
\end{example}
\begin{remark}
The additive functions $a=d^{-1}\omega\in\mathcal{A}(X,D)$ are algebraic analogues of abelian integrals of the second kind with zero $a$-periods on a compact Riemann surface of genus $g$ (see, e.g., \cite[Ch.~V, \S2]{Iwasawa}). We can define
$$\int_{P}^{Q}\omega =a(Q)-a(P),$$
where $a(P)=a_{P}\!\!\!\mod\mathfrak{p}\in k$ for every $P\in X$.
\end{remark}
It is quite remarkable that using the additive functions in Example \ref{Additive}, one can naturally define the uniformizers $t_{P}$ at all points $P\in X$. They are uniquely determined by the following data: a choice of a non-special divisor  $D=P_1 + \dots + P_g$ with distinct points, uniformizers $t_{i}$ at $P_{i}$ and additive functions $\eta_{1},\dots,\eta_{g}$. For every $P\in X$ let $\omega^{(2)}_{P}\in \Omega_{0}(2P)$ be the unique differential of the second kind with the only second order pole at $P$ and zero
$a$-periods such that
\begin{equation} \label{norm}
\sum_{i=1}^{g}\big(\theta_{i},\omega^{(2)}_{P}\big)_{X}=1.
\end{equation}
In particular, $\omega^{(2)}_{P_i}=\omega_i$ for $i=1,\dots, g$.
Let $\eta_P=d^{-1}\omega^{(2)}_P\in\mathcal{A}(X;D)$ be an additive
function with the only simple pole at $P\in X$. 
By \eqref{dec-2}, $\eta_P$ is uniquely determined up to an overall additive constant. We fix this constant by requiring that the sum of constant terms of 
$\left.\eta_P\right|_{P_{i}}\in k((t_{i}))$ over all $i=1,\dots,g$ be equal to zero. In particular, $\eta_{P_i} =\eta_i+c_{i}$ for some $c_{i}\in k$. 
For every $P\in X$ we now define the uniformizer $t_{P}$  by the formula
$$t_{P}=
-\left.\frac{1}{\eta_P}\right|_{P}, 
$$ 
and for $\omega^{(2)}_{P}=d\eta_{P}$ we have
$$\left.\omega^{(2)}_{P}\right|_{P}=t_{P}^{-2}dt_{P},\quad P\in X.$$
Extending this construction, we now endow the subspace $\Omega_0(\ast\,P)$ for every $P\in X$ with a basis $\{\omega^{(n+1)}_P\}_{n=1}^\infty$ consisting of
differentials of the second kind with the only pole at $P$ of order $n+1$ and zero $a$-periods, where the differentials $\omega^{(2)}_{P}$ are already specified by \eqref{norm}. Let $\eta^{(n)}_P=d^{-1}\omega^{(n+1)}_P\in\mathcal{A}(X;D)$ be an additive
function with the only pole at $P\in X$ of order $n$ and with the following choice of the overall
additive constant in \eqref{dec-2}. We put
$\eta^{(1)}_P=\eta_{P}$ and require the constant term of $\left.\eta^{(n)}_P\right|_{P}\in k((t_{P}))$ to be equal to  zero for all $\eta^{(n)}_P$ with $n>1$. For every $P\in X$ let $\mathcal{A}_P(X,D)$ be the $k$-span of $\eta^{(n)}_P$, $n\in\NN$. We have a decomposition
\begin{equation} \label{AD}
\mathcal{A}(X,D)=\left(\bigoplus_{P\in X}\mathcal{A}_P(X,D)\right)\oplus k.
\end{equation}
One can restate he property of isotropy of the subspace
$\Omega_0^{(\mathrm{2nd})}=d\,\mathcal{A}(X;D)$ and the condition AF3) in the following way.
\begin{lemma} \label{reciprocity}\mbox{}
\begin{itemize}
\item[(i)] For all $P,Q\in X$ and $m,n\in\NN$ we have
\begin{equation*}
\Res_P(\eta_P^{(m)}d\,\eta_Q^{(n)}) =
\Res_Q(\eta_Q^{(n)}d\,\eta_P^{(m)}).
\end{equation*}
\item[(ii)]
Every rational function $f\in F$ admits a unique `partial fraction expansion'
\begin{equation*}
f = \sum_{i=1}^l\sum_{j=1}^{n_i} c_{ij}\eta^{(j)}_{Q_i} + c,
\end{equation*}
where $n_1 Q_1 +\dots n_l Q_l=(f)_\infty$ is the polar divisor of $f$ and
$c,c_{ij}\in k$.
\end{itemize}
\end{lemma}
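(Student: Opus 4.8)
The plan is to extract both parts directly from the two facts recalled immediately before the statement: that $\Omega^{(\mathrm{2nd})}_0 = d\,\mathcal{A}(X;D)$ is isotropic for $(~,~)_X$ (part (i)), and condition \textbf{AF3} (part (ii)). For part (i) I would begin with isotropy applied to the two second-kind differentials $d\eta_P^{(m)}=\omega_P^{(m+1)}$ and $d\eta_Q^{(n)}=\omega_Q^{(n+1)}$, which both lie in $\Omega^{(\mathrm{2nd})}_0$. Choosing $d^{-1}\omega_P^{(m+1)}=\eta_P^{(m)}$, isotropy reads
$$0=\big(\omega_P^{(m+1)},\omega_Q^{(n+1)}\big)_X=\sum_{R\in X}\Res_R\!\big(\eta_P^{(m)}\,d\eta_Q^{(n)}\big).$$
Assume $P\neq Q$. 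Since $\eta_P^{(m)}$ is regular away from $P$ and $\omega_Q^{(n+1)}$ is regular away from $Q$, only the summands at $R=P$ and $R=Q$ survive, giving $\Res_P(\eta_P^{(m)}d\eta_Q^{(n)})=-\Res_Q(\eta_P^{(m)}d\eta_Q^{(n)})$.

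The second ingredient is purely local: for any $h\in F_Q$ one has $\Res_Q(dh)=0$, since $d(t^k)=k\,t^{k-1}dt$ never contributes a $t^{-1}dt$ term. Applying this to $h=\eta_P^{(m)}\eta_Q^{(n)}\in F_Q$ and expanding by Leibniz gives $\Res_Q(\eta_P^{(m)}d\eta_Q^{(n)})+\Res_Q(\eta_Q^{(n)}d\eta_P^{(m)})=0$. Combining the two relations yields $\Res_P(\eta_P^{(m)}d\eta_Q^{(n)})=\Res_Q(\eta_Q^{(n)}d\eta_P^{(m)})$, which is (i); for $P=Q$ the single surviving term of isotropy already forces both sides to vanish, so the identity holds trivially. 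I expect the only delicate point to be the bookkeeping of which local residues survive, and the genuinely substantive observation is that global isotropy alone is insufficient and must be paired with the local symmetry $\Res_Q(d(\eta_P^{(m)}\eta_Q^{(n)}))=0$.

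For part (ii) the plan is to subtract principal parts. Given $f\in F$ with polar divisor $(f)_\infty=\sum_{i=1}^l n_i Q_i$, I would first verify $df\in\Omega^{(\mathrm{2nd})}_0$: it has vanishing residues (being locally exact), and its $a$-periods vanish because $(df,\omega_i)_X=\sum_{R\in X}\Res_R(f\omega_i)=0$ by the residue formula applied to the genuine K\"ahler differential $f\omega_i\in\Omega^1_{F/k}$. Because $\mathrm{char}\,k=0$, a pole of $f$ of order $n_i$ at $Q_i$ forces $df$ to have a pole of order exactly $n_i+1$ at $Q_i$ and none elsewhere, so $df\in\bigoplus_i\Omega_0(-(n_i+1)Q_i)$. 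By Proposition \ref{Second} each $\Omega_0(-(n_i+1)Q_i)$ has basis $\{\,d\eta_{Q_i}^{(j)}=\omega_{Q_i}^{(j+1)}\,\}_{j=1}^{n_i}$, whence there are unique $c_{ij}\in k$ with $df=\sum_{i,j}c_{ij}\,d\eta_{Q_i}^{(j)}$.

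Finally I would set $g=f-\sum_{i,j}c_{ij}\eta_{Q_i}^{(j)}\in\mathcal{A}(X;D)$; by construction $dg=0$, so \textbf{AF3} applied with the rational function $0$ forces $g=c\in k$, yielding $f=\sum_{i,j}c_{ij}\eta_{Q_i}^{(j)}+c$. Uniqueness is immediate: the $c_{ij}$ are the coordinates of $df$ in the linearly independent family $\{d\eta_{Q_i}^{(j)}\}$ (independent across distinct $Q_i$ by the decomposition in Proposition \ref{Second}, and within each $Q_i$ by distinct pole orders), after which $c$ is determined. The main things to get right here are the $a$-period computation for $df$ via the residue formula and the characteristic-zero input ensuring the pole order of $df$ does not drop; the remaining content is the structure theory already supplied by Theorem \ref{Chevalley} and Proposition \ref{Second}.
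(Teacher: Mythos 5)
Your proof is correct and follows essentially the same route as the paper: part (i) pairs the isotropy of $\Omega_0^{(\mathrm{2nd})}$ (giving $0=(\omega_P^{(m+1)},\omega_Q^{(n+1)})_X$ with only the residues at $P$ and $Q$ surviving) with the local identity $\Res_Q\bigl(d(\eta_P^{(m)}\eta_Q^{(n)})\bigr)=0$, and part (ii) matches the principal parts of $df$ by a combination of the $\omega_{Q_i}^{(j+1)}$, kills the difference as a first-kind differential with zero $a$-periods, and concludes via \textbf{AF3}. Your extra verifications (zero $a$-periods of $df$ via the residue formula, the characteristic-zero pole-order bookkeeping, and the explicit uniqueness of the $c_{ij}$) simply spell out steps the paper leaves implicit.
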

\begin{proof} Since $\Res_{Q}(da)=0$ for all $a\in\A_{X}$, we get, for $P\neq Q$,
\begin{align*}
0=(\omega_{P}^{(m+1)},\omega_{Q}^{(n+1)})_{X} &=\Res_{P}(\eta_{P}^{(m)}d\eta_{Q}^{(n)}) + \Res_{Q}(\eta_{P}^{(m)}d\eta_{Q}^{(n)}) \\
& =\Res_{P}(\eta_{P}^{(m)}d\eta_{Q}^{(n)}) - \Res_{Q}(\eta_{Q}^{(n)}d\eta_{P}^{(m)}).
\end{align*}
For $P=Q$ we have $0=(\omega_{P}^{(m+1)},\omega_{P}^{(n+1)})_{X} =\Res_{P}(\eta_{P}^{(m)}d\eta_{P}^{(n)})$ for all $m,n\in\NN$.
Part (ii) follows directly from AF3) since there are $c_{ij}\in k$ such that
\begin{equation*}
df- \sum_{i=1}^l\sum_{j=1}^{n_i} c_{ij}\omega^{(j+1)}_{Q_i}\in\Omega_0^{(\mathrm{2nd})}\cap\Omega^{(\mathrm{1st})}=\{0\}.
\qedhere
\end{equation*}
\end{proof}
\begin{remark} Part (i) of Lemma \ref{reciprocity} is an algebraic analogue of the classical reciprocity law for differentials of the second kind with zero $a$-periods on a compact Riemann surface (see, e.g., \cite[Ch.~V, \S1]{Iwasawa} and \cite[Ch.~VI, \S3]{Kra}).
\end{remark}
\begin{remark}\label{0-a} In the genus zero case $X=\mathbb{P}^{1}_{k}=k\cup\{\infty\}$ we have $F=k(z)$ and
$$\omega_{P}^{(n+1)}=\frac{dz}{(z-P)^{n+1}}\;\;\text{for}\;\; P\in k,\quad
\omega_{P}^{(n+1)}=-z^{n-1}dz\;\;\text{for}\;\; P=\infty.$$
Correspondingly,
$$\eta_{P}^{(n)}(z)=-\frac{1}{n(z-P)^{n}}\;\;\text{for}\;\; P\in k,\quad
\eta_{P}^{(n)}(z)=-\frac{z^{n}}{n}\;\;\text{for}\;\; P=\infty.$$
\end{remark}
\begin{remark} \label{serre} Put $\mathbb{O}_{X}=\A_{X}(0)=\prod_{P\in X}\mathcal{O}_{X}$. By Lemma \ref{reciprocity} we have 
$$\A_{X}=\mathcal{A}(X,D_{\mathrm{ns}}) + \mathbb{O}_{X},$$
while Serre's adelic interpretation of cohomology yields
$$\A_{X}/(F+\mathbb{O}_{X})=H^{1}(X,\mathcal{O}_{X}).$$
\end{remark}
\begin{remark} \label{closed} The condition that the constant field $k$ is algebraically closed is not necessary: all results in this section remain valid for any field of constants of characteristic $0$ if we replace the field $k$ by the residue
class field $k(P)=\mathcal{O}_{\pp}/\pp$ and use the trace map $\Tr_{k(P)/k}: k(P)\rightarrow k$. For example, for the bilinear form $(~,~)_{X}$ we have
\begin{equation*}
(\omega_1,\omega_2)_X=\sum_{P\in X}\Tr_{k(P)/k}
\Res_P(d^{-1}\omega_1\,\omega_2).
\end{equation*}
\end{remark}

\begin{remark}
The multiplicative analogue of a $k$-vector space $\mathcal{A}(X)$ of additive multi-valued functions is the group $\mathcal{M}(X)$ of multiplicative multi-valued functions on $X$.  This subgroup of the group of invertible elements of the ad\`{e}le ring $\A_{X}$ is defined by the following properties. It contains $F^{\ast}$ as a subgroup, we have $\displaystyle{d\log m=m^{-1}dm=\omega\in\Omega^1_{F/k}}$ for all $m \in\mathcal{M}(X)$ and if $m\in\mathcal{M}(X)$ satisfies $d\log m=0$, then $m=c\in k^{\ast}$. It also seems natural to assume (as was done in a preliminary version of this paper) that the following multiplicative analogue of Lemma \ref{reciprocity} holds. Every rational function $f\in F^{\ast}$ can be written uniquely as a product of multiplicative multi-valued functions with one zero and one pole obeying the natural generalized
Weil reciprocity law on $X$ (see \cite{Weil},\cite{Serre}). However, the referee pointed out that this assertion contradicts the non-triviality of Poincar\'{e} bi-extension over the square of the Jacobian of $X$ \cite{Gorchinski}.
\end{remark}
\section{Local Theory\label{Local}}
Let $K$ be a complete closed field, that is, a complete discrete valuation field with valuation ring $\mathcal{O}_K$, maximal ideal $\pp$ and
algebraically closed residue field $k=\mathcal{O}_K/\pp$. 
Every local uniformizer $t$ determines an isomorphism
$K\simeq k((t))$. Therefore $K$ may be interpreted as a  `geometric loop algebra' over $k$. The main example of a complete closed field is $K=F_P$, where $P$ is a point on an algebraic curve $X$ over $k$. 

Here we describe some infinite-dimensional Lie algebras naturally associated with $K$ and construct their irreducible highest-weight modules. When $K=F_{P}$, these objects determine local quantum field theories at $P\in X$. Specifically, we consider the following local quantum field theories (QFT):
\begin{enumerate}
\item[\textbf{1.}] the `QFT of additive bosons', which corresponds to the Heisenberg
Lie algebra $\mathfrak{g}$ (a one-dimensional central extension of the geometric loop algebra $\mathfrak{g}\mathfrak{l}_1(K)=K$),
\item[\textbf{2.}] the `QFT of lattice bosons', which corresponds to the lattice
Lie algebra $\mathfrak{l}$ associated with the Heisenberg Lie algebra
$\mathfrak{g}$ and the lattice $\ZZ$.
\end{enumerate}

\subsection{The Heisenberg algebra\label{Heisenberg-Lie-local}} 
Let $\Omega^{1}_{K/k}$ be the $K$-module of K\"{a}hler differentials. We put $\displaystyle{\tilde{\Omega}^{1}_{K/k}=\Omega^{1}_{K/k}/\mathcal{Q}}$, where $\displaystyle{\mathcal{Q}=\cap_{n\geq 0}\,\pp^n d(\mathcal{O})}$ (see Section \ref{differentials}). The abelian Lie algebra $\g\mathfrak{l}_1(K)=K$ over the field $k$ is endowed
with a natural bilinear skew-symmetric form $c: \wedge^2 K\rightarrow k$ by the formula
\begin{equation*}
c(f,g) = -\Res (fdg),\quad f,g\in K,
\end{equation*}
where $dg\in \tilde{\Omega}^{1}_{K/k}$.
The bilinear form $c$ is continuous with respect to the $\mathfrak{p}$-adic topology on $K$
and the discrete topology on $k$. Hence $c\in H^{2}_{\mathrm{c}}(K,
k)\simeq\Hom_{\mathrm{c}}(\wedge^2 K, k)$, where $\Hom_{\mathrm{c}}(\wedge^2 K, k)$ is the group of continuous $2$-cocycles on $K$
with values in $k$.
\begin{definition} The Heisenberg Lie algebra $\g$
is the one-dimensional central extension of $K$
\begin{equation*}
0\rightarrow k\cdot C\rightarrow\g\rightarrow K\rightarrow 0
\end{equation*}
with the $2$-cocycle $c$.
\end{definition}
Writing $[~,~]$ for the Lie bracket in $\g=K\oplus k\cdot C$, we have
\begin{equation*}
[f+a\,C, g+b\,C]=c(f,g)\,C,\quad f,g\in K,~a,b\in k.
\end{equation*}
The Lie subalgebra $\g_+ =\OO_K \oplus k\cdot C$ is a maximal abelian subalgebra of $\g$.
\begin{remark} Let $\Aut\OO =\{u\in\OO : v(u)=1\}$ be the group of continuous automorphisms of the valuation ring $\OO=k[[t]]$
(see~\cite{ben-zvi-frenkel}).
One can easily show that every continuous
linear map $l: k((t)) \otimes_k k((t)) \rightarrow k$ which satisfies
\begin{equation*}
l(f\circ u, g\circ u) = l(f,g)
\end{equation*}
for all $f,g\in k((t))$ and $u\in\Aut\OO$ is a constant multiple of $c$. This explains the natural role  
of the $2$-cocycle $c$ of $K$.
In particular, every $\Aut\OO$-invariant bilinear form
$l$ is necessarily skew-symmetric. This may be regarded as a simple algebraic analogue of  the spin-statistics theorem.
\end{remark}
\begin{definition}
A \emph{module of the Heisenberg algebra} $\g$ is a $k$-vector $V$ with the discrete topology and with a
$k$-algebra homomorphism $\rho:\g\rightarrow\End V$ such that the $\g$-action on $V$
is continuous and $\rho(C)=\bm{I}$ is the identity endomorphism of $V$.
\end{definition}
Equivalently, for every $v\in V$ there is an open
subspace $U$ of $K$ which is commensurable with $\pp$ and annihilates $v$: $\rho(U)\,v=0$.
Putting $\bm{f}=\rho(f)\in\End V$ for all $f\in K$, we have
\begin{equation*}
[\bm{f},\bm{g}]=c(f,g)\bm{I}
\end{equation*}
and thus obtain a projective representation of the abelian Lie algebra $K$.
\begin{remark}
Any choice of the uniformizer $t$ for $K$ determines an isomorphism $K\simeq k((t))$ and a basis
basis $\{t^n\}_{n\in\ZZ}$ in $K$. Putting $\bm{\alpha}_n=\rho(t^n)$ and
using the formulae $c(t^m,t^n)=m\delta_{m,-n}$, we get the commutation relations of the `oscillator
algebra'
\begin{equation*}
[\bm{\alpha}_m,\bm{\alpha}_n]=m\delta_{m,-n}\bm{I}.
\end{equation*}
They characterize free bosons in the two-dimensional QFT.
\end{remark}
\begin{definition}
An \emph{irreducible highest-weight module of the Heisenberg algebra} $\g$ is an
irreducible $\g$-module with a vector $\bm{1}\in V$ which is annihilated by the
abelian subalgebra $\OO_K\oplus \{0\}$.
\end{definition}
The following result is well-known (see, for example, \cite[Lemma 9.13]{kac}).
\begin{theorem}
Each irreducible highest-weight module of the Heisenberg Lie algebra $\g$
is either the trivial one-dimensional module $\mathit{k}=k\cdot\bm{1}$  with the highest vector $\bm{1}=1\in k$, or the Fock module
\begin{equation*}
\cF=\Ind_{\g_{+}}^{\g}k
\end{equation*}
induced from the one-dimensional $\g_+$-module $k$.
\end{theorem}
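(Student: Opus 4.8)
The plan is to fix a uniformizer, reduce the statement to the standard representation theory of the oscillator algebra, and then run the usual creation/annihilation (normal-ordering) argument, with $\mathrm{char}\,k=0$ entering precisely at the point where one extracts the vacuum from an arbitrary vector. First I would fix an isomorphism $K \simeq k((t))$ and set $\bm{\alpha}_n = \rho(t^n)$, so that $[\bm{\alpha}_m, \bm{\alpha}_n] = m\delta_{m,-n}\bm{I}$. The element $t^0 = 1$ is central in $\g$ (since $c(1,g) = -\Res(dg) = 0$), so on an irreducible module $\bm{\alpha}_0$ acts by a scalar; the highest-weight condition $\rho(\OO_K\oplus\{0\})\,\bm{1}=0$ forces this scalar to vanish, hence $\bm{\alpha}_0 = 0$. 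Thus $V$ carries a representation of the oscillator algebra with lowering operators $\bm{\alpha}_n$ ($n\geq 1$) annihilating $\bm{1}$ and raising operators $\bm{\alpha}_{-n}$ ($n\geq 1$).

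Next, by irreducibility $V = U(\g)\,\bm{1}$, and the Poincar\'e--Birkhoff--Witt theorem together with the commutation relations lets me normal-order: moving each lowering operator to the right past the raising operators (each commutator producing only a central multiple of $\bm{I}$) and using $\bm{\alpha}_n\bm{1}=0$ for $n\geq 0$, I find $V$ is spanned by the ordered monomials $\bm{\alpha}_{-n_1}\cdots\bm{\alpha}_{-n_k}\bm{1}$ with $n_1\geq\cdots\geq n_k\geq 1$. This is precisely a spanning set for $\cF = \Ind_{\g_+}^{\g}k$, and the universal property of induction furnishes a surjective $\g$-module map $\cF \twoheadrightarrow V$.

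Now the dichotomy: either every raising operator kills $\bm{1}$, in which case $V = k\cdot\bm{1}$ is the trivial one-dimensional module, or some $\bm{\alpha}_{-n}\bm{1}\neq 0$. In the latter case a one-line computation, $\bm{\alpha}_n\bm{\alpha}_{-n}\bm{1} = (\bm{\alpha}_{-n}\bm{\alpha}_n + n\bm{I})\bm{1} = n\,\bm{1}$, nonzero because $n\neq 0$ in $k$, shows that the raising operators act injectively and, more generally, that $\cF$ carries no relations beyond those of PBW. To finish I would prove that $\cF$ is irreducible: given a nonzero $w\in\cF$, expand it in the monomial basis, select a monomial of top total degree, and apply the matching product of lowering operators $\bm{\alpha}_{n_1}\cdots\bm{\alpha}_{n_k}$; the outcome is a nonzero scalar multiple of $\bm{1}$, the scalar being a product of positive integers (with factorials from repeated indices), nonzero since $\mathrm{char}\,k=0$. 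Hence any nonzero submodule contains $\bm{1}$ and therefore equals $\cF$, and irreducibility of $\cF$ forces the surjection $\cF\twoheadrightarrow V$ to be an isomorphism.

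The main obstacle is this irreducibility step, specifically guaranteeing that the vacuum is recovered with a nonzero coefficient when a general vector is lowered; this is exactly where characteristic zero is indispensable, as the recovered coefficient would vanish in positive characteristic. Two bookkeeping points also need care: the monomial basis is infinite, so I must argue that each vector of $V$ involves only finitely many raising operators (guaranteed by the discreteness of $V$ and the continuity axiom, by which an open subalgebra commensurable with $\pp$ annihilates any given vector), and that the normal-ordering procedure terminates, which it does since each step either reduces the number of lowering operators or contributes a central scalar.
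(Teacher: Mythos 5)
Your proof is correct and matches the paper's approach in the only sense available: the paper gives no argument of its own, citing Kac (Lemma 9.13), and your normal-ordering reduction to the oscillator algebra plus the vacuum-recovery irreducibility argument (apply the matching lowering monomial to a top-degree term, obtaining a product of positive integers and factorials, nonzero precisely because $\mathrm{char}\,k=0$) is exactly the standard proof underlying that citation, with the continuity/finiteness bookkeeping for passing from $k[t,t^{-1}]$ to all of $K$ correctly noted. One cosmetic remark: under the paper's definition of $\g$-module the condition $\rho(C)=\bm{I}$ is imposed, so your first branch (all $\bm{\alpha}_{-n}\bm{1}=0$) is in fact vacuous, since $\bm{\alpha}_n\bm{\alpha}_{-n}\bm{1}=n\bm{1}\neq 0$; the trivial one-dimensional module in the statement only occurs if one relaxes to $C$ acting by zero, a looseness residing in the theorem's formulation itself rather than in your argument.
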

\begin{remark}
Let $U\g$ be the universal enveloping algebra of the Lie algebra $\g$.
By definition,
$$ \cF 
=U\g \underset{U\g_{+}}{\otimes}k,$$
where $U\g$ is regarded as a right $U\g_{+}$-module. 
Equivalently,
\begin{equation} \label{fock-1}
\cF=\cW/\cD,
\end{equation}
where
$\cW$ is the Weyl algebra  of $\g$, that is, the quotient of $U \g$ by the ideal generated by $C-\bm{1}$ (with now $\bm{1}$ standing for the identity in $U\g$) and $\cD$ is the left ideal in $\cW$ generated by $\mathcal{O}_K\oplus\{0\}$.
\end{remark}

Explicit realization of the Fock module $\cF$ (the bosonic Fock space) depends on a decomposition of $K$ into a
direct sum of subspaces isotropic with respect to the bilinear form $c$:
\begin{equation} \label{decomposition}
K=K_{+}\oplus K_{-},
\end{equation}
where the subspace $K_{+}=\mathcal{O}_K$ is defined canonically. In this case,
\begin{equation} \label{fock-2}
\cF\simeq \Symm^{\bullet} K_{-}
\end{equation}
is the symmetric algebra of the $k$-vector space $K_{-}$.  The Fock space $\cF$ is a $\ZZ$-graded commutative algebra
\begin{equation*}
\cF=
\bigoplus_{n=0}^{\infty}\, \cF^{(n)}
\end{equation*}
where $\cF^{(n)}\simeq \Symm^n K_{-},~\cF^{(0)}=
k\cdot\bm{1}$, and $\cF^{(n)}=\{0\}$ for $n<0$.
For every $f=f_{+} + f_{-} \in K$ the operator $\bm{f}=\rho(f)\in
\End\cF$ is defined by the formula
\begin{equation} \label{action-ab}
\bm{f}\cdot v=f_{-}\odot v+\sum_{i=1}^kc(f,v_i)\,v^i=f_{-}\odot v-\sum_{i=1}^k \Res\,(f_{+}\,dv_i)\,v^i,
\end{equation}
where $v=v_1\odot\cdots\odot v_k\in\cF^{(k)}$ and $
v^i=v_1\odot\cdots\odot\hat{v_i}\odot\cdots\odot v_k\in\cF^{(k-1)},~
i=1,\ldots,k$. Here $\odot$ stands for the multiplication in $\Symm^{\bullet}K_{-}$. In particular,
$$\bm{f}\cdot\bm{1}=f_{-}.$$

The Fock module $\cF$ is endowed with 
the linear topology given by the filtration associated with the $\ZZ$-grading and independent of the decomposition \eqref{decomposition}.

\begin{remark} Any choice of the uniformizer $t$ determines an isomorphism
$K\simeq k((t))$, and one can take $K_{-}=t^{-1}k[t^{-1}]$. The map
\begin{equation*}
\cF^{(n)}\ni v=t^{-m_1}\odot\dots \odot t^{-m_n}\mapsto x_{m_1}\dots
x_{m_n}\in k[x_1, x_2,\dots]
\end{equation*}
determines an isomorphism $\cF\simeq k[x_1, x_2, \dots]$ between the
bosonic Fock space and the polynomial ring in infinitely many variables
$\{x_n\}_{n\in\mathbb{N}}$. Under this map we have $\bm{\alpha}_n\mapsto
n\partial/\partial x_n,~\bm{\alpha}_{-n} \mapsto x_n,~n>0$ (the operator of multiplication
by $x_n$), and $\bm{\alpha}_0\mapsto 0$.
\end{remark}

\begin{remark} \label{splitting}
For an arbitrary complete closed field $K$ there is no canonical choice of the isotropic subspace $K_-$ complementary to $K_+=\OO_K$. 
However, any choice of an  effective non-special divisor $D=P_{1}+\dots +P_{g}$ of degree $g$ on an algebraic curve $X$ and uniformizers $t_{i}$ at $P_{i}$, determines such isotropic subspaces $K_{-}$ for all fields $K=F_{P}$, $P\in X$. Namely, let $\mathcal{A}(X,D)$ be the $k$-vector
space of additive functions defined in Example \ref{Additive}, and let $\mathcal{A}_P(X,D)$ be the subspace of additive functions with the only pole at $P$. We put
\begin{equation*}
K_- =\left.\mathcal{A}_P(X,D)\right|_P \subset K.
\end{equation*}
By part part (i) of Lemma \ref{reciprocity}, the subspace $K_-$ is isotropic with respect to $c$ and we have the decomposition \eqref{decomposition}. The subspace $K_-$ is spanned by the elements $v^{(n)}_P=\left.\eta^{(n)}_P\right|_P,\,n\in\mathbb{N}$, and  $d K_- = \left.\Omega_0(\ast P)\right|_P$.
\end{remark}
The bilinear form $c$ has the one-dimensional kernel $k$. Since
$\mathcal{O}_K/k = \pp$, the form $c$ determines a continuous non-degenerate pairing $c:\pp\otimes
K_{-}\rightarrow k$, whence  $\pp=K^{\vee}_{-}=\Hom(K_{-},k)$
is the topological dual to the $k$-vector space $K_{-}$. 
The topological dual of the bosonic Fock space $\cF$ is accordingly equal to the $k$-vector space
$\cF^{\vee}=\overline{\Symm^{\bullet} \pp}$ which is the completion of $\Symm^{\bullet} \pp$
with respect to the linear topology given by the filtration $\{F^n \Symm^{\bullet},
\pp\}_{n=0}^{\infty}$,
\begin{equation*}
F^n \Symm^{\bullet} \pp =\oplus_{i=0}^n\Symm^i\pp.
\end{equation*}
The continuous pairing $(~,~):\cF^{\vee}\otimes\cF\rightarrow k$ is uniquely determined by the pairing between $\Symm^{\bullet} \pp$ and 
$\cF=\Symm^{\bullet} K_{-}$ and is defined recursively by the formula
\begin{equation} \label{dual}
(u,v)=\delta_{kl}\sum_{i=1}^{l}c(u_{1},v_{i})(u^{1},v^{i}),
\end{equation}
where $u=u_1\odot\cdots\odot
u_k=u_{1}\odot u^{1}\in\Symm^k\pp$ and $v=v_1\odot\cdots \odot v_l=v_{i}\odot v^{i}\in\cF^{(l)}$.
The dual bosonic Fock space $\cF^{\vee}$ is a right $\g$-module with lowest-weight vector $\bm{1}^{\vee}$ annihilated by the subspace $K_{-}\oplus k$.

The representation $\rho$ of the Heisenberg algebra $\g$ on $\cF$ determines a
contragradient representation $\rho^{\vee}$ of $\g$ the Heisenberg algebra on $\cF^{\vee}$ by the formula
\begin{equation*} 
(u\cdot \rho^{\vee}(f),v)=(u,\rho(f)\cdot v),~\forall\, u\in\cF^{\vee},\,v\in\cF. 
\end{equation*}
More explicitly, put $f=\tilde{f}_{+}+\tilde{f}_{-}\in K$, where now $\tilde{f}_{+}\in \pp$ and $\tilde{f}_{-}\in K_{-}\oplus k$. Then it follows from \eqref{action-ab} and \eqref{dual} that the operator $\bm{f}=\rho^\vee(f)\in\End\cF^{\vee}$ is given by 
\begin{equation} \label{action-ab-dual}
u\cdot\bm{f}=\tilde{f}_{+}\odot u+ \sum_{i=1}^{k}c(u_i,f)u^i =\tilde{f}_{+}\odot u+\sum_{i=1}^{k}\Res\,(\tilde{f}_{-}du_i)u^i,
\end{equation} where
$u=u_1\odot\cdots\odot u_k\in\Symm^{k}\pp$ and
$u^i=u_1\odot\cdots\odot\hat{u_i}\odot\cdots\odot u_k\in\Symm^{k-1}\pp$.
\subsection{The lattice algebra\label{lattice algebra-local}} 
Let $k[\ZZ]$ be the group algebra of the additive group $\ZZ$.
As a $k$-vector space, $k[\ZZ]$ has a basis $\{e_n\}_{n\in\ZZ}$,
$e_{m}e_{n}=e_{m+ n}$. For every decomposition \eqref{decomposition} we define the `constant term' of any $f\in K$ as
$f(0)=f_{+}\!\!\!\mod\mathfrak{p}\in k$. Hence we have $f(0)=0$ for $f\in K_{-}$.
\begin{remark} If $K=F_{P}$ and $K_{-}=\left.\mathcal{A}_{P}(X,D)\right|_{P}$, then $f(0)$ is the constant term of the formal Laurent expansion of $f\in k((t_{P}))$ with respect to the uniformizer $t_{P}$ for $K$, defined in Section \ref{A-functions}. 
\end{remark}
\begin{definition} The lattice algebra $\mathfrak{l}$ associated with the decomposition \eqref{decomposition} is a semidirect sum of the Heisenberg Lie algebra $\mathfrak{g}$ and the abelian Lie algebra $k[\ZZ]$ with the Lie bracket
\begin{equation*}
[f+ aC +\alpha e_{m}, g+bC +\beta e_{n}]=c(f,g)C +m\alpha g(0)e_{m}-n\beta f(0)e_{n},
\end{equation*}
where $f+ aC, g+bC\in\mathfrak{g}$.
\end{definition}

The corresponding irreducible highest-weight module $\cB$ for the lattice algebra $\mathfrak{l}$ is given by
\begin{equation*}
\cB=k[\ZZ]\otimes\cF,
\end{equation*}
where $k[\ZZ]$ acts by multiplication and $K$ acts by the formula
$$\bm{f}(e_{n}\otimes v)=-nf(0)e_{n}\otimes v +e_{n}\otimes (\bm{f} \cdot v),\quad f\in K,\; v\in\cF.$$

The module $\cB$ (the Fock space of  `charged bosons') is a $\ZZ$-graded commutative algebra,
$$\cB=\bigoplus_{n\in\ZZ}\cB^{(n)},\quad \cB^{(n)}=k\cdot e_{n}\otimes \cF.$$
The elements $e_{n}$, $n\in\ZZ$, correspond to the shift operators $\bm{e}_{n}=\bm{e}^{n}$ in
$\cB$, where 
$$\bm{e}(e_{n}\otimes v)=e_{n+1}\otimes v,\quad v\in\cF.$$
\begin{remark}  \label{Z} Using the canonical isomorphism $K^{\ast}/\mathcal{O}^{\ast}_{K}\simeq \ZZ$ induced by the valuation $v: K^{\ast}\rightarrow\ZZ$, one can also define the Fock space $\cB$ as the space of all functions
$$F: K^{\ast}/\mathcal{O}^{\ast}_{K}\rightarrow\cF$$
with finite support. 
\end{remark}
\begin{remark} For
every choice of the uniformizer $t$ for $K$, the map
$$\cB^{(n)}\ni e_{n}\otimes (t^{-m_{1}}\odot\cdots\odot t^{-m_{l}})\mapsto e^{nx_{0}}x_{m_{1}}\dots x_{m_{l}}\in e^{nx_{0}}k[x_{1},x_{2},\dots]$$
establishes the isomorphism
$\cB\simeq k[e^{x_{0}},e^{-x_{0}},x_{1},x_{2},\dots]$.
Under this map we have $\bm{\alpha}_n\mapsto
n\partial/\partial x_n,~\bm{\alpha}_{-n} \mapsto x_n,~n>0$, 
$\bm{\alpha}_0\mapsto -\partial/\partial x_{0}$, and $\bm{e}\mapsto e^{x_{0}}$ (the operator of multiplication by $e^{x_{0}}$).
\end{remark}
The topological dual of $\cB$ is the $k$-vector space
$$\cB^{\vee}=\bigoplus_{n\in\ZZ}\,\,k\!\cdot\! q^{n}\otimes\cF^{\vee},$$ 
where $\{q^{n}\}_{n\in\ZZ}$ is the basis in
$k[\ZZ]^{\vee}$ dual to the basis $\{e_{n}\}_{n\in\ZZ}$.
The continuous pairing $(~,~):\cB^{\vee}\otimes\cB\rightarrow k$ is
given by
\begin{displaymath}
(q^{m}\otimes u,e_{n}\otimes v)=(u,v)\delta_{mn},\quad u\in\cF^{\vee}, \;v\in\cF.
\end{displaymath}

As for the Heisenberg algebra, 
the representation $\rho$ of the lattice Lie algebra $\mathfrak{l}$ on $\cB$ determines a contragradient representation $\rho^{\vee}$ on $\cB^{\vee}$. The dual Fock space $\cB^{\vee}$ is a right $\mathfrak{l}$-module with
lowest-weight vector $\bm{1}^{\vee}$ annihilated by $K_{-}$.

\section{Global Theory\label{Global}}
Given an algebraic curve $X$ over an algebraically closed field $k$ of characteristic 0,
we shall define the global versions of the local QFT's introduced in the previous section. One can briefly characterize these global QFT's as follows.
\begin{enumerate}
\item[1.] The `QFT of additive bosons on $X$' corresponds
to the global Heisenberg algebra $\mathfrak{g}_X$ (the restricted direct sum of local Heisenberg algebras $\mathfrak{g}_P$  over all points $P\in X$). The
global Fock space $\cF_X$ is defined as the restricted tensor product
of the local Fock spaces $\cF_P$ over all points $P\in X$. The 
global Fock space $\cF_X$ is a highest-weight $\mathfrak{g}_X$-module. There is a linear functional
$\langle\,\cdot\,\rangle\,:\cF_X\rightarrow k$ (the expectation value
functional) which is uniquely determined by the properties of normalization and invariance with respect to
the space of additive functions.
\item[2.] The `QFT of charged bosons on $X$' corresponds to the 
global lattice algebra $\mathfrak{l}_X$. The global charged Fock space $\cB_X$ is a highest-weight $\mathfrak{l}_X$-module and there is a unique expectation value functional
$\langle\,\cdot\,\rangle: \cB_X\rightarrow k$ with similar properties.
\end{enumerate}
\subsection{Additive bosons on $X$\label{AB}}
The QFT of additive bosons consists of the following data.
\begin{enumerate}
\item[AB1)] An effective non-special divisor $D_{\mathrm{ns}}=P_1+\dots + P_g$ of degree $g$ on $X$ with distinct points, uniformizers
$t_i$ at $P_i$ and the $k$-vector space of additive functions
$\mathcal{A}(X,D_{\mathrm{ns}})$ (a subspace of $\A_X$ containing $F=k(X)$) introduced in Example \ref{Additive}.
\item[AB2)] The local QFT's of additive bosons (the highest-weight
$\g_P$-modules $\cF_P$ for all points $P\in X$).
\item[AB3)] The global Heisenberg algebra $\g_X$ (the one-dimensional
central extension of the abelian Lie algebra
$\g\mathfrak{l}_1(\A_X)=\A_X$ by the cocycle $c_X=\sum_{P\in X} c_P$).
\item[AB4)] A highest-weight $\g_X$-module $\cF_X$ (the global Fock space, which is the restricted tensor product of $\cF_P$ over
all points $P\in X$).
\item[AB5)] An expectation value functional, that is, a
linear map $\langle\,\cdot\,\rangle: \cF_X\rightarrow k$ with the following properties:
\begin{itemize}
\item[(i)] $\langle\textbf{1}_{X} \rangle=1$, where $\textbf{1}_{X}\in\cF_X$
is the highest-weight vector,
\item[(ii)] $\langle \bm{a}\cdot v\rangle=0$ for all $a\in\mathcal{A}(X,D_{\mathrm{ns}})$ and
$v\in\cF_X$.
\end{itemize}
\end{enumerate}

The data AB1) and AB2) have already been described in Sections  \ref{second kind} and
\ref{Heisenberg-Lie-local}. Here we introduce the global Heisenberg algebra $\g_X$, construct the corresponding global Fock space $\cF_X$ and prove that there is a unique expectation value functional  $\langle\,\cdot\,\rangle$ with properties (i) and (ii).

Let
$c_X:\A_X \times \A_X\rightarrow k$ be the global bilinear form
\begin{equation*}
c_X(x,y)=\sum_{P\in X}c_P(x_P,y_P) = -\sum_{P\in X}\Res_P(x_Pd y_P),\quad x,y\in\A_X.
\end{equation*}
\begin{definition} The \emph{global Heisenberg Lie algebra} $\g_X$
is the one-dimensional central extension of the abelian Lie algebra
$\A_X$
\begin{equation*}
0\rightarrow k\,C\rightarrow\g_X\rightarrow \A_X\rightarrow 0
\end{equation*}
by the two-cocycle $c_X$.
\end{definition}

The Lie subalgebra $\g_{X}^{+}=\mathbb{O}_X\oplus kC$ is the maximal abelian subalgebra of $\g_{X}$.
\begin{definition}
The \emph{global Fock space} $\cF_X$ is an irreducible $\g_{X}$-module with vector $\bm{1}_{X}$ annihilated 
by the abelian
subalgebra $\mathbb{O}_X\oplus \{0\}$.
\end{definition}
As in the local case, the global Fock module is induced from the one-dimensional $\g_{X}^{+}$--module: 
$$\cF_{X}=\Ind_{\g_{X}^{+}}^{\g_{X}} k.$$
By what was said in the previous section,  we have a decomposition \eqref{decomposition} for $K=F_{P}$, $P\in X$, where $F_{P}^{(+)}=\mathcal{O}_{P}$ and $F_{P}^{(-)}=\left.\mathcal{A}_{P}(X,D)\right|_{P}$.
This yields the following decomposition 
of the $k$-vector space $\A_X$ into a direct sum of subspaces isotropic with respect to $c_X$:
\begin{equation} \label{decomposition-global}
\A_X = \mathbb{O}_{X}\oplus\mathcal{F}_{X}^{(-)}.
\end{equation}
Here
\begin{equation*}
\mathcal{F}_X^{(-)} =\coprod_{P\in X} F_P^{(-)}
\end{equation*}
is the restricted direct product over all $P\in X$ with respect to the zero subspaces $\{0\}\subset F_{P}^{(-)}$.
The decomposition \eqref{decomposition-global} gives rise to an isomorphism
\begin{equation*}
\cF_X\simeq\Symm^\bullet\mathcal{F}_X^{(-)}.
\end{equation*}
The global Fock space $\cF_X$ carries a linear topology given by the natural
filtration associated with the $\ZZ$-grading.

Equivalently,  $\cF_X$ may be defined as the tensor product
\begin{equation*}
\cF_X = \underset{P\in X}{\widehat{\otimes}}\cF_P,
\end{equation*}
which is restricted with respect to the vectors $\bm{1}_P\in\cF_P$ and is endowed with the
product topology. In other words, $\bm{1}_X =\otimes_{P\in X} \bm{1}_P$, and $\cF_X$ is spanned by the vectors
$$v= \underset{P\in X}\otimes v_P,$$ where $v_P=\bm{1}_P$ for all but finitely many $P\in X$.
For every $P\in X$ we have $v=v_{P}\otimes v^{P}$, where $v^{P}=\otimes_{Q\in X}\tilde{v}_{Q}$,  $\tilde{v}_{Q}=v_{Q}$ for $Q\neq P$ and $\tilde{v}_{P}=\bm{1}_{P}$.
We denote the corresponding representation of $\g_{P}$ on $\cF_{P}$,  $P\in X$, by
$\rho_{P}$, and the representation of $\g_{X}$ on $\cF_{X}$ by $\rho$. Putting $\bm{x}=\rho(x)\in\End\cF_{X}$ for $x=\{x_{P}\}_{P\in X}\in\A_{X}$ and taking any $v=\otimes_{P\in X} v_P$, we have
$$\bm{x}\cdot v=\sum_{P\in X} \bm{x}_{P}\cdot v_P\otimes v^{P},$$
where $\bm{x}_{P}=\rho_{P}(x_{P})\in\End\cF_{P}$.

Put
\begin{equation*}
\mathfrak{P}_X =\prod_{P\in X}\pp.
\end{equation*}
The topological dual of the global Fock space $\cF_X$ is the $k$-vector space
$\cF_X^{\vee}=\overline{\Symm^{\bullet}\mathfrak{P}_X}$, which is the completion of
$\Symm^{\bullet}\mathfrak{P}_X$ with respect to the linear topology given by the natural
filtration associated with the $\ZZ$-grading. The dual global Fock space
$\cF_X^{\vee}$ is a right $\g_X$-module with lowest-weight vector $\bm{1}_X^{\vee}$ annihilated by the abelian subalgebra $\mathcal{F}_X^{(-)}\oplus\{0\}$. Equivalently,
\begin{equation*}
\cF_X^{\vee} = \overline{\underset{P\in
X}{\widehat{\otimes}}\cF_P^\vee}
\end{equation*}
is the completion of the tensor product restricted with respect to the
vectors $\bm{1}_P^{\vee}$. The completion is taken with respect to the double
filtration $\{F^{mn}\Symm^\bullet\mathfrak{P}_X\}$,
\begin{equation*}
F^{mn}\Symm^\bullet\mathfrak{P}_X= \sum_{i=0}^m\sum_{P_1,\dots,P_i\in X}
\left(\bigoplus_{l_1+\dots + l_i=0}^n\Symm^{l_1}
\pp_1\otimes\dots\otimes\Symm^{l_i}\pp_i\right).
\end{equation*}
In other words, the elements of $\cF_X^\vee$ are infinite sums
\begin{equation*}
u=\sum_{n=0}^\infty\sum_{P_1,\dots,P_n\in X}a_{\scriptscriptstyle{P_1\dots P_n}}u_{\scriptscriptstyle{P_1\dots P_n}},
\end{equation*}                                                                \
where the $u_{\scriptscriptstyle{P_1\dots P_n}}\in \overline{\cF^\vee}_{P_1\dots P_n}$ belong to the
completion of the tensor product
\begin{displaymath}
\cF_{P_1\dots
P_n}^\vee=\cF_{P_1}^\vee\otimes\dots\otimes\cF_{P_n}^\vee
\end{displaymath}
with respect to the filtration
\begin{equation*}
F^{m}\cF_{P_1\dots P_n}^\vee
= 
\bigoplus_{l_1+\dots + l_n=0}^m\left(\Symm^{l_1}\pp_1\otimes\dots\otimes\Symm^{l_n}
\pp_n\right).
\end{equation*}
Let $\{u^{(n)}_P\}_{n\in\NN}$ be the basis of $\pp$ dual to the basis $\left\{v^{(n)}_P=\left.\eta^{(n)}_P\right|_P\right\}_{n\in\NN}$ of $F_P^{(-)}$ with respect to the pairing given by $c_{P}$ (see Section \ref{Heisenberg-Lie-local}).  
Then we see that $\cF_X^\vee$ is the completion of the space $k[[u_{P}^{n}]]$ of formal Taylor series in infinitely
many variables $u^{(n)}_P,\,P\in X, n\in\NN$. This realization of $\cF_X^\vee$ is used 
to prove the following main result in the QFT of additive bosons. 
\begin{theorem} \label{additive theorem}
There is a unique linear functional $\langle\, \cdot\,\rangle:\cF_X\rightarrow k$ (the
expectation value functional) with the following properties:
\begin{itemize}
\item[EV1)] $\langle\bm{1}_X\rangle = 1$,
\item[EV2)] $\langle\bm{a}\cdot v\rangle =0$ for all $a\in\mathcal{A}(X,D_{\mathrm{ns}})$ and
$v\in\cF_X$.
\end{itemize}
The functional  $\langle\, \cdot\,\rangle$ is given by
\begin{displaymath}
\langle v\rangle =\left(\Omega_X,v\right),
\end{displaymath}
where
\begin{equation*}
\Omega_X=\exp\left\{-\frac{1}{2}\sum_{m,n=1}^\infty\sum_{P,Q\in X} c^{(mn)}_{PQ}
u^{(m)}_Pu^{(n)}_Q\right\}\in\cF_X^\vee,
\end{equation*}
\begin{displaymath}
c^{(mn)}_{PQ} = -\Res_Q(\eta^{(m)}_P d\eta^{(n)}_Q).
\end{displaymath}
\end{theorem}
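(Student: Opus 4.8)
The plan is to recast both requirements as a single linear problem for the dual vector $\Omega_X\in\cF_X^\vee$, and then solve it in the realization of $\cF_X^\vee$ as formal power series in the variables $u_P^{(n)}$. Writing $\langle v\rangle=(\Omega_X,v)$, property \textbf{EV1} becomes the normalization that the constant term of $\Omega_X$ equals $1$. For \textbf{EV2}, the defining property of the contragradient representation gives $(\Omega_X,\rho(a)v)=(\Omega_X\cdot\rho^\vee(a),v)$, so by non-degeneracy of the pairing \textbf{EV2} is equivalent to $\Omega_X\cdot\rho^\vee(a)=0$ for all $a\in\mathcal{A}(X,D_{\mathrm{ns}})$. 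Using the decomposition \eqref{AD}, together with the facts that $\mathcal{A}(X,D_{\mathrm{ns}})$ is spanned modulo $k$ by the functions $\eta_P^{(n)}$ and that the constant adeles act by zero (they lie in $\mathcal{O}_X$ and are central for $c_X$), I would reduce this to the countable system $\Omega_X\cdot\rho^\vee(\eta_P^{(n)})=0$, $P\in X$, $n\in\NN$.

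Next I would compute each operator $\rho^\vee(\eta_P^{(n)})$ explicitly. Since the global dual action is the sum over $Q\in X$ of the local actions \eqref{action-ab-dual} applied to the components $(\eta_P^{(n)})_Q$, I split into cases. At $Q=P$ the component is $v_P^{(n)}=\eta_P^{(n)}|_P\in F_P^{(-)}$, lying entirely in $K_-$, and by the duality $c_P(u_P^{(l)},v_P^{(m)})=\delta_{lm}$ the annihilation term of \eqref{action-ab-dual} contributes the derivation $\partial/\partial u_P^{(n)}$. At $Q\neq P$ the component $\eta_P^{(n)}|_Q$ is holomorphic; its constant term $\eta_P^{(n)}(Q)\in k$ acts trivially, while its part in $\pp_Q$ acts by multiplication, and expanding this part in the basis $\{u_Q^{(l)}\}$ yields the coefficients $-\Res_Q(\eta_P^{(n)}\,d\eta_Q^{(l)})=c_{PQ}^{(nl)}$. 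Since $c_{PP}^{(nl)}=0$ by Lemma \ref{reciprocity}(i), I obtain
\[
\rho^\vee(\eta_P^{(n)})=\frac{\partial}{\partial u_P^{(n)}}+\sum_{Q\in X}\sum_{l=1}^\infty c_{PQ}^{(nl)}\,u_Q^{(l)},
\]
where multiplication by this infinite linear form is well-defined on formal power series because, on any fixed monomial, only finitely many terms survive.

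The decisive structural input is the symmetry $c_{PQ}^{(mn)}=c_{QP}^{(nm)}$, which I would deduce by combining the isotropy of $\Omega^{(\mathrm{2nd})}_0$ from Proposition \ref{Second}(i) (giving $\Res_P(\eta_P^{(m)}d\eta_Q^{(n)})+\Res_Q(\eta_P^{(m)}d\eta_Q^{(n)})=0$) with the reciprocity of Lemma \ref{reciprocity}(i). Equivalently, this identity is exactly the flatness condition $[\rho^\vee(\eta_P^{(m)}),\rho^\vee(\eta_Q^{(n)})]=c_{QP}^{(mn)}-c_{PQ}^{(nm)}=0$ that makes the system integrable. Granting it, the Gaussian ansatz solves everything: for $\Omega_X=\exp(S)$ with $S=-\tfrac12\sum_{P,Q,m,n}c_{PQ}^{(mn)}u_P^{(m)}u_Q^{(n)}$, the symmetry lets me combine the two terms obtained by differentiating $S$ into $\partial S/\partial u_P^{(n)}=-\sum_{Q,l}c_{PQ}^{(nl)}u_Q^{(l)}$, so that $\rho^\vee(\eta_P^{(n)})\Omega_X=0$ for every $P,n$; the vanishing of $S$ at the origin gives \textbf{EV1}.

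For uniqueness I would factor an arbitrary solution as $\widetilde\Omega=\Omega_X\cdot G$ (legitimate since $\Omega_X$ is an invertible formal power series with constant term $1$); the multiplication parts cancel and the equations collapse to $\partial G/\partial u_P^{(n)}=0$ for all $P,n$, forcing $G\in k$, and then the normalization forces $G=1$. It remains only to confirm that $\Omega_X$ genuinely lies in the completed dual $\cF_X^\vee=\overline{\Symm^\bullet\mathfrak{P}_X}$ for the double filtration; this holds because each homogeneous coefficient of $\exp(S)$ is a finite combinatorial expression in the $c_{PQ}^{(mn)}$. I expect the main obstacle to be the explicit computation of $\rho^\vee(\eta_P^{(n)})$ — in particular correctly tracking the vanishing of both the constant-term contributions at $Q\neq P$ and the diagonal $c_{PP}^{(nl)}$ terms — after which the reciprocity law of Lemma \ref{reciprocity} renders both existence and uniqueness essentially formal.
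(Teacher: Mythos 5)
Your proposal is correct and follows essentially the same route as the paper's own proof: reduce \textbf{EV1}--\textbf{EV2} via the decomposition \eqref{AD} to the system $\Omega\cdot\rho^{\vee}(\eta_P^{(n)})=0$, identify $\rho^{\vee}(\eta_P^{(n)})$ as $\partial/\partial u_P^{(n)}$ plus multiplication by $\sum_{Q,m}c_{PQ}^{(nm)}u_Q^{(m)}$ using \eqref{action-ab-dual} and $c_{PP}^{(nm)}=0$, and invoke the symmetry $c_{PQ}^{(mn)}=c_{QP}^{(nm)}$ from Lemma \ref{reciprocity}(i) to make the Gaussian ansatz work. Your explicit uniqueness step (factoring $\widetilde\Omega=\Omega_X\cdot G$) and the membership check in $\cF_X^{\vee}$ merely spell out what the paper leaves implicit in asserting that the compatible system \eqref{system} has a unique normalized solution.
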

\begin{proof}
It follows from decomposition \eqref{AD} that 
a linear functional of the form $\langle v\rangle =(\Omega,v)$ possesses properties
EV1) and EV2) if and only if it is normalized, $(\Omega,\bm{1}_X)=1$, and $\Omega\in\cF_{X}^{\vee}$ satisfies the system of equations
\begin{equation} \label{omega-eta}
\Omega\cdot\boldsymbol{\eta^{(n)}_P}=0
\end{equation}
 for all $P\in X$ and $n\in\NN$,
where $\boldsymbol{\eta^{(n)}_P}=\rho^{\vee}(\eta^{(n)}_P)$.
Write $\eta^{(n)}_P =\beta^{(n)}_{P} + \gamma^{(n)}_{P}$, where 
$\beta^{(n)}_{P}=\{\beta^{(n)}_{PQ}\}_{Q\in X}$ and $\gamma^{(n)}_{P}=\{\gamma^{(n)}_{PQ}\}_{Q\in X}\in\A_{X}$
are given by
\begin{equation*}
\beta^{(n)}_{PQ} 
=\begin{cases} 0& \text{if $Q=P$}, \\
\left.\eta^{(n)}_P\right|_Q & \text{if $Q\neq P$},
\end{cases} \quad\;
\gamma^{(n)}_{PQ}
=\begin{cases} \left.\eta^{(n)}_P\right|_P& \text{if $Q=P$}, \\
 0& \text{if $Q\neq P$}.
\end{cases}
\end{equation*}
It follows from \eqref{action-ab-dual} that $\bm{\gamma^{(n)}_{P}}$ acts on $\cF_X^\vee$ as
differentiation with respect to $u^{(n)}_P$.
For $Q\neq P$ we have
$$\beta_{PQ}^{(n)}=a^{(n)}_{PQ} + \sum_{m=1}^{\infty}a_{PQ}^{(nm)}u_{Q}^{(m)},$$
where $a^{(n)}_{PQ}\in k$ and
$$a_{PQ}^{(nm)}=c(\beta_{PQ}^{(n)},v_{Q}^{(m)})=-\Res_{Q}(\eta_{P}^{(n)}d\eta_{Q}^{(m)})=c_{PQ}^{(nm)}.$$
Since $c_{PP}^{(nm)}=0$ (see Lemma \ref{reciprocity}), we conclude that $\bm{\beta^{(n)}_{P}}$ acts on 
$\cF_{X}^{\vee}$ as a
multiplication by
$\sum_{Q\in X} c^{(nm)}_{PQ} u^{(m)}_Q$.
One can rewrite the  equations \eqref{omega-eta} in the form
\begin{equation} \label{system}
\left(\frac{\partial}{\partial u^{(n)}_P} + \sum_{Q\in X} c^{(nm)}_{PQ} u^{(m)}_Q\right)\Omega=0, \quad P\in X,\;n\in\NN.
\end{equation}
It follows from part (i) of Lemma \ref{reciprocity} that 
$$c_{PQ}^{(mn)}=c_{QP}^{(nm)},$$
whence the system of differential equations \eqref{system} is compatible and $\Omega_X$ is its unique normalized solution.
\end{proof}
\begin{remark}  \label{currents} Let $\g$ be a semi-simple Lie algebra over $k$ with the Cartan-Killing form $\langle~,~\rangle$. Then the $k$-vector space $\A_{X}$ with bilinear form $c_{X}$ may be replaced  by the $k$-vector space
$\mathbb{V}_{X}=\g\otimes_{k}\A_{X}$ with bilinear form
$- \sum_{P\in X}\Res_P\langle x_P,d y_P\rangle$.
Theorem \ref{additive theorem} extends to this case. The additive Ward identities hold for $\g\otimes_{k}\mathcal{A}(X,D_{\mathrm{ns}})$ and the corresponding QFT is associated with the current algebra on $X$ in the sense of \cite{Witten-2}.
\end{remark}
\subsection{Charged additive bosons on $X$\label{CB}}
The QFT of charged additive bosons is determined by the following data.
\begin{enumerate}
\item[CB1)] An effective non-special divisor $D_{\mathrm{ns}}=P_1+\dots + P_g$ of degree $g$ on $X$ with distinct points, uniformizers 
$t_i$ at $P_i$ and the $k$-vector space of additive functions $\mathcal{A}(X,D_{\mathrm{ns}})$ (a subspace of $\A_X$ containing $F=k(X)$) introduced in Example \ref{Additive}.
\item[CB2)] The local QFT's of charged additive bosons (the highest-weight
$\mathfrak{l}_P$-modules $\cB_P$ for all points $P\in X$).
\item[CB3)] The global lattice algebra $\mathfrak{l}_X$ (a 
semi-direct sum of the global Heisenberg algebra $\g_{X}$ and the abelian Lie algebra $k[\Div_{0}(X)]$ with generators $e_{D}$, $D\in\Div_{0}(X)$, where $k[\Div_{0}(X)]$ is the group algebra of the additive 
group $\Div_{0}(X)$ of degree $0$ divisors on $X$).
\item[CB4)] A highest-weight $\mathfrak{l}_X$-module $\cB_X$ (the global Fock space
 with the highest-weight vector $\bm{1}_{X}\in\cB_X$).
\item[CB5)] An expectation value functional, that is, a 
linear map $\langle\,\cdot\,\rangle: \cB_X\rightarrow k$ with
the following properties:
\begin{itemize}
\item[(i)] $\langle\bm{e}_{\bm{D}}\cdot \bm{1}_{X} \rangle=1$ for
all $D\in\Div_{0}(X)$,
\item[(ii)] $\langle \bm{a}\cdot u\rangle=0$ for all $a\in\mathcal{A}(X,D_{\mathrm{ns}})$
and $u\in\cB_X$.
\end{itemize}
\end{enumerate}

As a $k$-vector space, the group algebra $k[\Div_0(X)]$ has a basis $\{e_D\}_{D\in\Div_0(X)}$,
$e_{D_1}e_{D_2}=e_{D_1 + D_2}$. 
For every $x=\{x_P\}\in\A_X$ and $D=\sum_{P\in X} n_P\,P\in\Div_0(X)$, we put
\begin{displaymath}
x(D) = \sum_{P\in X} n_P x_P(0)\in k,
\end{displaymath}
where $x_P(0)=x_P^{+}\!\!\mod\pp\in k$ is the constant term of $x_P\in F_P$, (it is determined by decomposition \eqref{decomposition} associated with the non-special divisor $D_{\mathrm{ns}}$; see Section \ref{lattice algebra-local}).
\begin{definition}
The \emph{global lattice algebra} $\mathfrak{l}_{X}$ is the semi-direct sum of the global Heisenberg
algebra $\g_X$ and the abelian Lie algebra $k[\Div_0(X)]$ with Lie bracket
\begin{displaymath}
[x + \alpha C+ \gamma e_{D_1}, y+\beta C + \delta e_{D_2}] = c_X(x,y)C +y(D_1)\gamma e_{D_1}- x(D_2)\delta e_{D_2} ,
\end{displaymath}
where $x+\alpha C, y+\beta C\in\g_X$ and $\gamma,\delta\in k$.
\end{definition}
The global Fock space $\cB_X$ is the tensor product of
the group algebra $k[\Div_0(X)]$ and
the Fock space of additive bosons $\cF_X$:
\begin{equation*}
\cB_X =k[\Div_0(X)]\otimes \cF_X  =
\bigoplus_{D\in\Div_0(X)}\cB_X^D,
\end{equation*}
where
\begin{displaymath}
\cB_X^D=k\cdot e_D\otimes\cF_X.
\end{displaymath}
$\cB_X$ is an irreducible $\mathfrak{l}_X$-module where $k[\Div_{0}(X)]$ acts by multiplication:
\begin{align}
\bm{e}_{\bm{D_{1}}}(e_{D_{2}}\otimes v) & =e_{D_{1}+D_{2}}\otimes v,\quad v\in\cF_{X}, \label{A-1} \\
\intertext{and $\A_{X}$ acts by the formula}
\bm{x}(e_{D}\otimes v) & =-x(D)e_{D}\otimes v
+e_{D}\otimes(\bm{x}\cdot v),\quad x\in\A_{X},\;v\in\cF_{X}. \label{A-2}
\end{align}
For every $D =\sum_{P\in X}n_P\,P
\in\Div_0(X)$ the subspace $\cB_X^D$ is an
irreducible $\g_X$-module with the following property. If $x=\{x_P\}_{P\in X}\in\A_X$ with $x_P\in k$ for
all $P\in X$, then the restriction of the operator $\bm{x}$ to $\cB_X^D$ is equal to $-x(D)\bm{I}$, where 
$\bm{I}$ is the identity operator. In particular, when $x=c$ is a constant, we have $x(D)=c\deg D=0$, and $\bm{x}$ acts by zero on $\cB_{X}$. 

\begin{remark} One can also define an extended global lattice algebra
$\tilde{\mathfrak{l}}_{X}$ as a semidirect sum of the global Heisenberg
algebra $\g_X$ and the abelian Lie algebra $k[\Div(X)]$. The corresponding irreducible $\tilde{\mathfrak{l}}_{X}$-module is the extended Fock space
\begin{equation*}
\tilde{\cB}_{X}=k[\Div(X)]\otimes \cF_X =
\bigoplus_{D\in\Div(X)}\cB_X^D.
\end{equation*}
The action of $\tilde{\mathfrak{l}}_{X}$  on $\tilde{\cB}_{X}$ is given by the same formulas \eqref{A-1}--\eqref{A-2}, where now the constant ad\`{e}le $x=c$ acts on $\cB_{X}^{D}$ by $(c\deg D)\bm{I}$.
\end{remark}
The dual Fock space $\cB^\vee_X$ is defined as a
completion of the direct sum of dual spaces to 
$\cB_X^D$ over $D\in\Div_{0}(X)$. This completion is given by formal infinite sums. 
Explicitly, 
\begin{equation*}
\cB^\vee_X =\overline{ \bigoplus_{D\in\Div_0(X)}\cB_X^\vee(D)},
\end{equation*}
where
\begin{displaymath}
\cB_X^\vee(D) =k\cdot q^D\otimes\cF_X^\vee,
\end{displaymath}
$q^D\in k[\Div_{0}(X)]^{\vee}$ are dual to $e_D$, and $\cF_{X}^{\vee}$ was defined in Section \ref{AB}.

\begin{theorem} \label{charged theorem}
There is a unique linear functional $\langle\,\cdot\,\rangle:\cB_X
\rightarrow k$ (the expectation value functional) with the following properties:
\begin{itemize}
\item[EV1)] $\langle\bm{e}_{\bm{D}}\cdot \bm{1}_X\rangle = 1$ for all $D\in\Div_{0}(X)$,
\item[EV2)] $\langle\bm{a}\cdot v\rangle =0$ for all $a\in\mathcal{A}(X,D_{\mathrm{ns}})$ and $v\in\cB_X$.
\end{itemize}
The functional $\langle\,\cdot\,\rangle$ is given by
\begin{displaymath}
\langle v\rangle =(\hat{\Omega}_X,v),
\end{displaymath}
where
\begin{equation*}
\hat{\Omega}_X=\sum_{D\in\Div_0(X)} q^D\otimes
\exp\left\{\sum_{n=1}^\infty\sum_{P\in X}\eta^{(n)}_{P}(D)
u^{(n)}_P\right\}\Omega_X \in\cB_X^\vee,
\end{equation*}
and $\Omega_{X}$ is defined in Theorem \rm{\ref{additive theorem}}.
\end{theorem}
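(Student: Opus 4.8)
The plan is to mirror the proof of Theorem \ref{additive theorem}, now working charge sector by charge sector. I would seek the functional in the form $\langle v\rangle=(\hat\Omega_X,v)$ with $\hat\Omega_X=\sum_{D\in\Div_0(X)}q^D\odot\Omega_X^{(D)}$, $\Omega_X^{(D)}\in\cF_X^\vee$, and use the pairing $(q^{D_1}\odot u,e_{D_2}\odot v)=(u,v)\,\delta_{D_1D_2}$ to test the two conditions against vectors $e_D\odot v\in\cB_X^D$. Property \textbf{EV1} then reads $(\Omega_X^{(D)},\bm 1_X)=1$ for every $D$, i.e.\ each $\Omega_X^{(D)}$ must have leading term $1$; for the proposed exponential this is automatic, since both $\exp\{\cdots\}$ and $\Omega_X$ have constant term $1$.

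For \textbf{EV2} I would first invoke the decomposition \eqref{AD} to reduce the vanishing $\langle\bm a\cdot v\rangle=0$, which is linear in $a$, to the generating cases $a\in k$ and $a=\eta_P^{(n)}$. The constant case is trivial: a constant adele lies in $\mathcal{O}_X$ and hence annihilates $\cF_X$, while its charge contribution $-a(D)=-a\deg D$ vanishes on $\Div_0(X)$, so $\bm a$ acts by zero on all of $\cB_X$. Thus the entire force of \textbf{EV2} resides in the adeles $\eta_P^{(n)}$.

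The key step is the charge shift \eqref{A-2}. Substituting it and projecting onto the $D$-sector of $\hat\Omega_X$ gives
\begin{equation*}
\langle\rho(\eta_P^{(n)})\cdot(e_D\odot v)\rangle
=-\eta_P^{(n)}(D)\,(\Omega_X^{(D)},v)+(\Omega_X^{(D)}\cdot\rho^\vee(\eta_P^{(n)}),v),
\end{equation*}
so that \textbf{EV2} becomes the eigenvalue relation $\Omega_X^{(D)}\cdot\rho^\vee(\eta_P^{(n)})=\eta_P^{(n)}(D)\,\Omega_X^{(D)}$ in $\cF_X^\vee$. Writing $\Omega_X^{(D)}=e^{\Phi_D}\Omega_X$ with $\Phi_D=\sum_{n,P}\eta_P^{(n)}(D)\,u_P^{(n)}$, and recalling from the proof of Theorem \ref{additive theorem} that $\rho^\vee(\eta_P^{(n)})$ acts on $\cF_X^\vee$ as $\partial/\partial u_P^{(n)}+\sum_{Q,m}c_{PQ}^{(nm)}u_Q^{(m)}$ while $\Omega_X\cdot\rho^\vee(\eta_P^{(n)})=0$, I would compute
\begin{equation*}
\Omega_X^{(D)}\cdot\rho^\vee(\eta_P^{(n)})
=\frac{\partial\Phi_D}{\partial u_P^{(n)}}\,\Omega_X^{(D)}
+e^{\Phi_D}\bigl(\Omega_X\cdot\rho^\vee(\eta_P^{(n)})\bigr)
=\eta_P^{(n)}(D)\,\Omega_X^{(D)},
\end{equation*}
using that the multiplication operator commutes with $e^{\Phi_D}$ and that $\partial\Phi_D/\partial u_P^{(n)}=\eta_P^{(n)}(D)$. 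This is exactly the relation required, and it shows that the exponent $\Phi_D$ is forced precisely so as to absorb the lattice shift $-\eta_P^{(n)}(D)$.

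The main obstacle, as in the additive case, is compatibility of the resulting infinite system together with uniqueness. In each fixed charge sector the equations above are the inhomogeneous version of the system \eqref{system}, and I would argue that they still admit a unique normalized solution: the relevant integrability is the symmetry $c_{PQ}^{(mn)}=c_{QP}^{(nm)}$ of part (i) of Lemma \ref{reciprocity}, which is untouched by the constant inhomogeneities $\eta_P^{(n)}(D)$, so the homogeneous compatibility of \eqref{system} carries over verbatim. Since the sectors are independent and \textbf{EV1} fixes the normalization in each, this gives both existence and uniqueness of $\hat\Omega_X$. It then remains only to verify that $\hat\Omega_X=\sum_D q^D\odot e^{\Phi_D}\Omega_X$ genuinely lies in the completed dual $\cB_X^\vee$ --- that is, that each $e^{\Phi_D}\Omega_X$ is a well-defined element of $\cF_X^\vee$ relative to its filtration --- after which assembling the sectors produces the asserted closed formula.
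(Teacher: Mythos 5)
Your proposal is correct and follows essentially the same route as the paper's proof: decompose $\hat\Omega_X$ into charge sectors $q^D\odot\Omega_D$, reduce \textbf{EV2} via \eqref{AD} to the constants (which act by zero on $\cB_X$) and the generators $\eta_P^{(n)}$, use the shift term in \eqref{A-2} to turn the Ward identity into the inhomogeneous version of the system \eqref{system} in each sector, and invoke the symmetry $c_{PQ}^{(mn)}=c_{QP}^{(nm)}$ from Lemma \ref{reciprocity}(i) together with the sector-wise normalization from \textbf{EV1} for existence and uniqueness. Your verification of the closed form by conjugating with $e^{\Phi_D}$ and using $\Omega_X\cdot\rho^{\vee}(\eta_P^{(n)})=0$ is just a repackaging of the paper's explicit solution of that differential system, including the tacit use of $\left.\eta_P^{(n)}\right|_P(0)=0$ in evaluating $\eta_P^{(n)}(D)$.
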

\begin{proof} We put
\begin{displaymath}
\Omega=\sum_{D\in\Div_0(X)} q^D\otimes \Omega_D,\quad\Omega_D\in\cF_X^\vee.
\end{displaymath}
The condition $(\Omega, e_{D}\otimes\bm{1}_{X})=1$ for all $D\in\Div_0(X)$ 
is equivalent to the normalization $(\Omega_{D}, \bm{1}_{X})=1$. 
Since the constants act by zero on $\cB_{X}$, it suffices to verify that  
\begin{equation} \label{vanish}
(q^D\otimes \Omega_D)\cdot\bm{\eta^{(n)}_P}=0
\end{equation}
for all $D=\sum_{Q\in X}n_Q\,Q\in\Div_0(X)$ and $P\in X$. Since
\begin{equation*}
q^D\cdot\bm{\eta^{(n)}_P} =-\eta^{(n)}_{P}(D)\,q^{D}=-\sum_{Q\in X}n_Q\left.\eta^{(n)}_P\right|_Q(0) 
\,q^{D}
\end{equation*}
(note that $\left.\eta^{(n)}_P\right|_P(0)=0$ by the definition in Section \ref{lattice algebra-local}), 
we see from \eqref{vanish} that $\Omega_D$ satisfies the following system of differential equations
\begin{equation*}
\left(\frac{\partial}{\partial u^{(n)}_P} - \sum_{Q\in X}n_Q\left.\eta^{(n)}_P\right|_Q(0)
+\sum_{Q\in X} c^{(nm)}_{PQ} u^{(m)}_Q \right)\Omega_D =0.
\end{equation*}
This system has a unique normalized solution given by
\begin{equation*}
\Omega_{D}=\exp\left\{\sum_{n=1}^\infty\sum_{P\in X}\eta^{(n)}_{P}(D)
u^{(n)}_P-\frac{1}{2}\sum_{m,n=1}^\infty\sum_{P,Q\in X} c^{(mn)}_{PQ}
u^{(m)}_P u^{(n)}_Q\right\}.
\qedhere
\end{equation*} 
\end{proof}
\begin{remark} Theorems \ref{additive theorem} and \ref{charged theorem} hold for an arbitrary field $k$ of constants of characteristic $0$ (see Remark \ref{closed}).
\end{remark}

\begin{remark} All results in this section hold trivially in the case when 
$X$ has genus 0. Using Remark \ref{0-a}, one can easily obtain elementary explicit formulae for the expectation value functional $\langle\,\cdot\,\rangle$ for quantum additive and charged bosons on $\mathbb{P}^{1}_{k}$. 
\end{remark}
\subsection{Invariant formulation\label{Inv}}
Here we present an invariant formulation and
a proof of a generalization of Theorem  \ref{additive theorem} for the current algebra. They were suggested
by the referee. Let $V$ be a $k$-vector space regarded as abelian Lie algebra over $k$, and let $c$ be a skew-symmetric bilinear form on $V$. We write $\tilde{V}$ for the one-dimensional central extension of $V$ 
\begin{equation*}
0\rightarrow k\cdot C\rightarrow\tilde{V}\rightarrow V\rightarrow 0
\end{equation*}
with the 2-cocycle $c$, 
and $\cW$ for the Weyl algebra of the Lie algebra $\tilde{V}$, as in Section \ref{Heisenberg-Lie-local}. Let $U$ and $W$ be isotropic subspaces of $V$ with respect to $c$ such that $U\cap W$ and $V/(U+W)$
are finite-dimensional and $U\cap W$ lies in the kernel of $c$. 
\begin{lemma} There is a canonical isomorphism of $k$-vector spaces
$$\cW/\cW\!\cdot\!(U+W)\simeq \Symm^{\bullet}\left(V/(U+W)\right).$$
\end{lemma}
\begin{proof}
This is proved by direct calculation in a symplectic basis of $V$ compatible with the corresponding bases in $U$ and $W$.
\end{proof}
In the notation of Remark \ref{currents} we put $V=\mathbb{V}_{X}=\g\otimes_{k}\A_{X}$, 
$$c(x,y)= - \sum_{P\in X}\Res_P\langle x_P,d y_P\rangle,$$
and $U=\g\otimes_{k} F$, $W=\g\otimes_{k}\mathbb{O}_{X}$, where $F=k(X)$. Then 
$$\cW/\cW\!\cdot\!W\simeq\cF_{X}$$ 
is the Fock space of the current algebra on $X$. Using Serre's adelic interpretation of cohomology in the form
$$V(U+W)\simeq \g\otimes_{k}H^{1}(X,\mathcal{O}_{X}),$$
we obtain from Lemma 5.1 that
$$\cF_{X}/(\g\otimes_{k} F)\!\cdot\!\cF_{X}\simeq\Symm^{\bullet}(\g\otimes_{k}H^{1}(X,\mathcal{O}_{X})).$$
This shows that the global symmetries $\g\otimes_{k} F$ do not uniquely determine the expectation value functional
$\langle\,\cdot\,\rangle$ except in the case when $X=\mathbb{P}^{1}_{k}$. 

To extend the Lie algebra of global symmetries, we consider a Lagrangian subspace 
$L\subset H_{\mathrm{dR}}^{1}(X)\simeq \Omega^{(2\mathrm{nd})}/dF$ such that the restriction to $L$
of the natural map $H_{\mathrm{dR}}^{1}(X)\rightarrow H^{1}(X,\mathcal{O}_{X})$ 
 is an isomorphism. For example, take $L=k\cdot\omega_{1}\oplus\cdots\oplus k\cdot\omega_{g}$ (see Theorem \ref{Chevalley}). Let
$\tilde{L}$ be the inverse image of $L$ under the map $\Omega^{(2\mathrm{nd})}\rightarrow \Omega^{(2\mathrm{nd})}/dF$.
We claim that there is a subspace $U_{0}\subset \A_{X}$ such that 
$$F\subset U_{0},\quad U_{0}\cap\mathbb{O}_{X}=k\quad\text{and}\quad dU_{0}=\tilde{L}.$$
For example, take $U_{0}=\mathcal{A}(X,D_{\mathrm{ns}})$. Then $U=\g \otimes_{k}U_{0}$ is an isotropic subspace of $V$ and, by  Remark \ref{serre}, we have 
$$V/(U+W)=\{0\}.$$ 
Therefore,
$$\cF_{X}/U\cdot \cF_{X}\simeq k,$$
which is essentially Theorem \ref{additive theorem} (without an explicit formula for the vector $\Omega_{X}$). By Remark \ref{closed}, the condition that the field $k$ is algebraically closed is not necessary.
\bibliographystyle{plain}
\bibliography{qft}
\end{document}